\newtheorem{lemma}{Lemma}
\newtheorem{thm}{Theorem}
\newtheorem{definition}{Definition}
\newtheorem{example}{Example}
\newtheorem{corollary}{Corollary}
\numberwithin{equation}{section}
\begin{document}
	\leftline{ \scriptsize \it}
\title[]
{Fractional $\alpha$-Bernstein-Kantorovich operators of order $\beta$: A new construction and approximation results}
\maketitle
\begin{center}
	{\bf  Jaspreet Kaur$^{1,2,a}$, Meenu Goyal$^{2,b}$, Khursheed J. Ansari$^{3,c}$}
	\vskip0.2in
	$^{1}$ Department of Mathematics, GLA University, Mathura-281406, India\\
	$^{2}$Department of Mathematics,
	Thapar Institute of Engineering and Technology, Patiala-147004, India\\
	$^3$Department of Mathematics, College of Science, King Khalid University, 61413, Abha, Saudi Arabia
	\vskip0.2in
	${}^{a}$jazzbagri3@gmail.com, ${}^{b}$meenu\_rani@thapar.edu and ${}^{c}$ansari.jkhursheed@gmail.com
\end{center}
\begin{abstract}
In the current article, we establish a distinct version of the operators defined by Berwal \emph{et al.}, which is the Kantorovich type modification of $\alpha$-Bernstein operators to approximate Lebesgue's integrable functions. We define its modification that can preserve the linear function and analyze its characteristics. Additionally, we construct the bivariate of blending type operators by Berwal \emph{et al.}. We analyze both its the convergence and error of approximation properties by using the conventional tools of approximation theory. Finally, we demonstrate our results by presenting examples that highlight graphical visuals using MATLAB.

	\textbf{Keywords:}  Positive linear operators, Convergence, Modulus of continuity.\\
	\textbf{Mathematics Subject Classification(2020):} 26A15, 40A05, 41A10, 41A25, 41A35.
\end{abstract}
\section{Introduction}
Bernstein operators were introduced by S. N. Bernstein to give the proof of the very well-known Weierstrass approximation theorem. These operators are the most simplest and constructive operators with so many remarkable properties. Due to these properties, later these polynomials became famous in many computer applications.  These operators are useful to approximate continuous functions. These are extended in several directions. Kantorovich \cite{kan} introduced the classical Bernstein-Kantorovich operators, defined as
\begin{eqnarray}
K_n(f;x)&=&(n+1)\displaystyle\sum_{k=0}^n p_{n,k}(x)\int_{\frac{k}{n}}^{\frac{k+1}{n}}f(t)\,\mathrm{d}t\nonumber\\
&=&(n+1)\displaystyle\sum_{k=0}^n p_{n,k}(x)\int_{0}^1f\left(\dfrac{k+t}{n+1}\right)\,\mathrm{d}t,\quad\quad x\in[0, 1],~~n\in \mathbb{N},
\end{eqnarray}
where $f:[0, 1]\rightarrow \mathbb{R}$ is an integrable function and $p_{n,k}(x)$ are known as Bernstein basis polynomials defined as $$p_{n,k}(x)=\binom{n}{k}x^k(1-x)^{n-k},\quad k=0,1,\dotsb,n.$$
Also, another generalizations of these operators are still studied by many researchers as \cite{DT, two, GA}.  
Baskakov \cite{Baskakov1957} and Sz\'asz \cite{szasz1950general} introduced the positive linear operators to approximate the continuous functions on infinite interval. The integral operators associated with these operators can be seen in \cite{ansari2022kant,ansari2022num,bushnaq2022comp,  cai2022stat,rahman2024esti}.
In order to reduce the error of approximation of the operators, generalizations of the positive linear operators has been started as one can see \cite{ lambe, QBER, stan}.
Chen \textit{et al.} \cite{CHE} introduced the modification of Bernstein polynomials depending on a real parameter $\alpha \in [0,1]$, which are defined as
\begin{eqnarray}\label{alpha}
	T_{n,\alpha}(f;x)&=&\displaystyle \sum_{k=0}^n b_{n,k}^{\alpha}(x)f\left(\frac{k}{n}\right),
\end{eqnarray}
{where}
\begin{eqnarray}\label{alpha1}
	b_{n,k}^{\alpha}(x)&=&\left[{n-2\choose k}(1-\alpha)x+{n-2 \choose k-2}(1-\alpha)(1-x)+{n \choose k}\alpha x(1-x)\right]x^{k-1}(1-x)^{n-k-1},\, n\geq2.\nonumber\\
	&&\hspace{8cm}
\end{eqnarray}
For $n=1$, we have $b_{1,0}^{\alpha}(x)=1-x, \quad b_{1,1}^{\alpha}(x)=x.$ \\
The authors studied its shape-preserving as well as approximation properties.  In 2019, Deo and Pratap \cite{deo2020alpha} presented the Kantorovich variant of $\alpha$-Bernstein operators and studied the direct approximation theorem, and asymptotic results for these operators.  Kajla and Acar \cite{kajla2018blend} introduced the Durrmeyer modification of the summation operators \eqref{alpha} and studied the rate of convergence and some approximation properties. Kajla and Goyal \cite{kajla2019general} modified the Durrmeyer variant of these operators by using P\u{a}lt\u{a}nea basis function in an integral depending on a parameter $\rho>0.$ 

Baytun\c{c} et al. \cite{baytuncc2023app} generalize the Bernstein-Kantorovich operators using Riemann-Liouville  type fractional integers. The authors studied the approximation properties of these operators. Recently, Berwal \emph{et al.} defined the Kantorovich type generalization of $\alpha-$Bernstein operators given by Deo and Pratap \cite{deo2020alpha} 
for $f\in C[0,1]$ and $ n\in \mathbb{N}$ in the following way:
\begin{eqnarray}\label{op}
K_{n,\alpha}^{\beta}(f;x)=\Gamma{(\beta+1)}\sum_{k=0}^n b_{n,k}^{\alpha}(x)\int_0^1\frac{(1-t)^{\beta-1}}{\Gamma{(\beta)}}f\left(\frac{k+t}{n+1}\right)\, \mathrm{d}t,
\end{eqnarray}	
where $\alpha\in [0,1], \beta\in (0,1]$ and using the following definitions:

\begin{definition}
	The Gamma function also known as Euler's Gamma function denoted by $\Gamma:(0,\infty)\rightarrow \mathbb{R}$ is defined as
	$$\Gamma(\beta)=\int_0^{\infty}t^{\beta-1}e^{t-1}\, \mathrm{d}t, \quad \text{Re}(\beta)>0,$$
	 and by using analytic continuation for all $\beta\in \mathbb{C}$ except the simple poles at the non-positive integers.
\end{definition}
\begin{definition}
	The beta function of find kind is defined by
	$$B(x,y)=\int_0^1t^{x-1}(1-t)^{y-1}\, \mathrm{d}t\quad Re(x), Re(y)>0.$$
	The Gamma and beta functions are related by the following relationship:
	$$B(x,y)=\dfrac{\Gamma(x)\Gamma(y)}{\Gamma(x+y)} \quad x,y>0.$$
\end{definition}
\begin{definition}\label{def4}\cite{kilbas1993}
	The Riemann-Liouville type fractional integral $(I_{a^+}^\beta f)(x)$ of order $\beta>0, (Re(\beta)>0)$ for integrable function $f\in[0,\infty)$ is defined as
	$$({}_{a^+}I_x^{\beta}f)(.)=\dfrac{1}{\Gamma(\beta)}\int_a^x(x-t)^{\beta-1}f(t)\,\mathrm{d}t, \quad x>0.$$
\end{definition}
 Motivated by these generalizations, we present the new positive linear operators using Riemann-Liouville type fractional integrals that can preserve linear functions and its generalization to approximate the functions of two variables.

The present article is structured as:  In the section \ref{affine}, we define a modification of the operators \eqref{op} introduced by Berwal \emph{et al.} \cite{Berwalapp} in order to preserve the linear functions as well as constants. We study the convergence and linear precision property of these operators. In the next section \ref{bivariate}, we define the bivariate of the operators and study its results. The last section \ref{numerical} is equipped with some examples to justify the results of the operators and the effect of the different introduced parameters.
			\section{Preserving affine functions}\label{affine}
			The well-known Bernstein operators satisfy the linear precision property, but the operators \eqref{op} do not satisfy this property. So, we modify the operators to get this property. Therefore, we construct the operators in the following form:
			\begin{eqnarray}\label{op1}
				A_{n,\alpha}^{\beta}(f;x)=\Gamma(\beta+1)\sum_{k=0}^n b_{n,k}^{\alpha}(x)\int_0^1 \frac{(1-t)^{\beta-1}}{\Gamma(\beta)}f\left(a_{n,k}\frac{k+t}{n+1}\right)\, \mathrm{d}t,
			\end{eqnarray}
			where we need to choose $a_{n,k}$ in such a way that $A_{n,\alpha}^{\beta}(e_i;x)=e_i, \,\, i=0,1.$ Thus, by the moments of $\alpha$-Bernstein operators, we require
\begin{enumerate}
	\item $\displaystyle\Gamma(\beta+1)\int_0^1 \dfrac{(1-t)^{\beta-1}}{\Gamma(\beta)}\, \mathrm{d}t=1,$
	\item $\displaystyle\Gamma(\beta+1)\int_0^1 \dfrac{(1-t)^{\beta-1}}{\Gamma(\beta)} a_{n,k}\dfrac{k+t}{n+1}\, \mathrm{d}t=\dfrac{k}{n}.$	
	\end{enumerate}		
	By solving these equations, we get $a_{n,k}=\dfrac{k(n+1)(\beta+1)}{n[k(\beta)+1]}.$\\
Now we have the lemma as follows:

			\begin{lemma}\label{lemma1}
We can compute the moments of our operators $A_{n,\alpha}^{\beta}(f;x)$ as follows:
					\item $A_{n,\alpha}^{\beta}(e_0;x)=1;$
\item $A_{n,\alpha}^{\beta}(e_1;x)=x;$
\item $A_{n,\alpha}^{\beta}(e_2;x)=x^2+\dfrac{n+2(1-\alpha)}{n^2}x(1-x)+\dfrac{\beta}{2+\beta}T_{n,\alpha}\left(\dfrac{t^2}{(nt(\beta+1)+1)^2};x\right).$
			\end{lemma}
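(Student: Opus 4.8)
The plan is to reduce everything to the elementary moments of the fractional weight and to the known moments of the underlying $\alpha$-Bernstein operator $T_{n,\alpha}$. First I would record, via the Beta function $B(m+1,\beta)=\Gamma(m+1)\Gamma(\beta)/\Gamma(m+1+\beta)$, the three weighted moments
$$M_m:=\Gamma(\beta+1)\int_0^1\frac{(1-t)^{\beta-1}}{\Gamma(\beta)}\,t^m\,\mathrm{d}t,\qquad m=0,1,2,$$
obtaining $M_0=1$, $M_1=\frac{1}{\beta+1}$ and $M_2=\frac{2}{(\beta+1)(\beta+2)}$. I would also re-derive the scaling factor from the two defining equations; solving (1)--(2) gives $a_{n,k}=\frac{k(n+1)(\beta+1)}{n[k(\beta+1)+1]}$, so that the argument of $f$ simplifies to $a_{n,k}\frac{k+t}{n+1}=\frac{k(\beta+1)(k+t)}{n\,D_k}$ with $D_k:=k(\beta+1)+1$; this is exactly the quantity that appears (at $t=k/n$) inside the residual term of the statement.

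For $e_0$ and $e_1$ the result is immediate and amounts to a restatement of the affine-preservation conditions that defined $a_{n,k}$. For $e_0$, the inner integral contributes $M_0=1$ by construction, and $\sum_k b_{n,k}^{\alpha}(x)=T_{n,\alpha}(e_0;x)=1$. For $e_1$, the defining equation (2) forces the $k$-th inner integral to equal $k/n$, whence $A_{n,\alpha}^{\beta}(e_1;x)=\sum_k b_{n,k}^{\alpha}(x)\frac{k}{n}=T_{n,\alpha}(e_1;x)=x$.

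The substance is in $e_2$. I would substitute the closed form above, expand $(k+t)^2=k^2+2kt+t^2$, and integrate termwise against the weight to get
$$A_{n,\alpha}^{\beta}(e_2;x)=\sum_{k=0}^n b_{n,k}^{\alpha}(x)\,\frac{k^2(\beta+1)^2}{n^2 D_k^2}\Big[k^2M_0+2kM_1+M_2\Big].$$
Inserting the values of $M_0,M_1,M_2$ and clearing denominators, the bracket times $(\beta+1)^2$ becomes $k^2(\beta+1)^2+2k(\beta+1)+\frac{2(\beta+1)}{\beta+2}$. The main (and essentially only) obstacle is the algebraic simplification here: I would use the identity $D_k^2-1=k^2(\beta+1)^2+2k(\beta+1)$, which collapses the first two summands into $D_k^2-1$, so that the expression reduces to $D_k^2-1+\frac{2(\beta+1)}{\beta+2}=D_k^2+\frac{\beta}{\beta+2}$ after the elementary identity $-1+\frac{2(\beta+1)}{\beta+2}=\frac{\beta}{\beta+2}$.

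Dividing back by $D_k^2$, the per-$k$ coefficient becomes $\frac{k^2}{n^2}\big(1+\frac{\beta}{(\beta+2)D_k^2}\big)$. Summing against $b_{n,k}^{\alpha}(x)$ then splits the result into $\sum_k b_{n,k}^{\alpha}(x)\frac{k^2}{n^2}=T_{n,\alpha}(e_2;x)$, which equals $x^2+\frac{n+2(1-\alpha)}{n^2}x(1-x)$ by the known second moment of the $\alpha$-Bernstein operators \cite{CHE}, plus $\frac{\beta}{2+\beta}\sum_k b_{n,k}^{\alpha}(x)\frac{k^2/n^2}{D_k^2}=\frac{\beta}{2+\beta}T_{n,\alpha}\big(\frac{t^2}{(nt(\beta+1)+1)^2};x\big)$, since $D_k=nt(\beta+1)+1$ at $t=k/n$. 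This is precisely the claimed formula, so the only place requiring care is the factorization via $D_k^2-1$; everything else is bookkeeping.
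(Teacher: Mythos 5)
Your proof is correct and complete; note that the paper states this lemma without giving any proof at all, so your computation fills that gap rather than paralleling an existing argument. The route you take (weighted moments $M_0=1$, $M_1=\tfrac{1}{\beta+1}$, $M_2=\tfrac{2}{(\beta+1)(\beta+2)}$ via the Beta function, then the factorization $k^2(\beta+1)^2+2k(\beta+1)=D_k^2-1$ with $D_k=k(\beta+1)+1$) is the natural one and all the algebra checks out, including the reduction $-1+\tfrac{2(\beta+1)}{\beta+2}=\tfrac{\beta}{\beta+2}$ and the identification of $D_k$ with $nt(\beta+1)+1$ at $t=k/n$. You are also right to re-derive $a_{n,k}=\tfrac{k(n+1)(\beta+1)}{n[k(\beta+1)+1]}$: the paper's printed denominator $n[k(\beta)+1]$ is a typo, and only your corrected value is consistent with the $(nt(\beta+1)+1)^2$ appearing in the lemma's statement.
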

			\begin{thm}
				For any $f\in C[0,1], \beta\in(0,1],$ we have $A_{n,\alpha}^{\beta}(f;x)$ converges uniformly to $f(x).$
			\end{thm}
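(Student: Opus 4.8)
The plan is to apply the Bohman--Korovkin theorem, which reduces the uniform convergence of a sequence of positive linear operators on $C[0,1]$ to verifying convergence on only the three test functions $e_0,e_1,e_2$. First I would record that the operators are indeed positive and linear: for $\alpha\in[0,1]$ the basis functions $b_{n,k}^{\alpha}(x)$ are nonnegative on $[0,1]$, and the kernel $\frac{(1-t)^{\beta-1}}{\Gamma(\beta)}$ is nonnegative on $[0,1]$ for $\beta\in(0,1]$, so $A_{n,\alpha}^{\beta}$ maps nonnegative functions to nonnegative functions. Hence the hypotheses needed to apply the Korovkin criterion are in place.

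Next I would invoke Lemma~\ref{lemma1}. The first two moment identities give $A_{n,\alpha}^{\beta}(e_0;x)=1=e_0(x)$ and $A_{n,\alpha}^{\beta}(e_1;x)=x=e_1(x)$ \emph{exactly}, so convergence for $i=0,1$ is immediate and uniform (by construction the operators reproduce constants and linear functions). The only genuine work is the case $i=2$, where I must show $A_{n,\alpha}^{\beta}(e_2;x)\to e_2(x)=x^2$ uniformly on $[0,1]$.

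For this I would analyze the two correction terms in the formula for $A_{n,\alpha}^{\beta}(e_2;x)$ supplied by Lemma~\ref{lemma1}. The term $\frac{n+2(1-\alpha)}{n^2}\,x(1-x)$ tends to $0$ uniformly, since $x(1-x)\le \tfrac14$ on $[0,1]$ while $\frac{n+2(1-\alpha)}{n^2}=O(1/n)$. For the remaining term $\frac{\beta}{2+\beta}\,T_{n,\alpha}\!\left(\frac{t^2}{(nt(\beta+1)+1)^2};x\right)$, I would evaluate the inner function at the nodes $t=k/n$, obtaining $\frac{k^2}{n^2\,(k(\beta+1)+1)^2}$, and use the elementary estimate $\frac{k}{k(\beta+1)+1}\le \frac{1}{\beta+1}$ to get the uniform bound $\frac{k^2}{n^2(k(\beta+1)+1)^2}\le \frac{1}{n^2(\beta+1)^2}$ for all $k$. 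Since $\sum_{k=0}^{n} b_{n,k}^{\alpha}(x)=1$, this forces the entire term to be at most $\frac{\beta}{(2+\beta)(\beta+1)^2}\cdot\frac{1}{n^2}\to 0$ uniformly in $x$. Combining the three cases yields $A_{n,\alpha}^{\beta}(e_i;x)\to e_i(x)$ uniformly for $i=0,1,2$, and the Korovkin criterion delivers the claimed uniform convergence.

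The main obstacle, to the extent there is one, is the third term: it is the only contribution that is not visibly $O(1/n)$, and handling it rests on the observation that the function $\frac{t^2}{(nt(\beta+1)+1)^2}$ is uniformly $O(1/n^2)$ on the evaluation nodes, after which the normalization $\sum_k b_{n,k}^{\alpha}(x)=1$ of the basis completes the estimate.
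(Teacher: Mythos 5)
Your proposal is correct and follows essentially the same route as the paper: verify positivity, use Lemma~\ref{lemma1} for $e_0,e_1,e_2$, and invoke Bohman--Korovkin, with the only real work being the term $\frac{\beta}{2+\beta}T_{n,\alpha}\bigl(\tfrac{t^2}{(nt(\beta+1)+1)^2};x\bigr)$. The one difference is in your favor: the paper merely observes that $\mu_{n,k}(t)\to 0$ pointwise and asserts the conclusion, whereas you supply the explicit uniform bound $\tfrac{k^2}{n^2(k(\beta+1)+1)^2}\le\tfrac{1}{n^2(\beta+1)^2}$ together with $\sum_k b_{n,k}^{\alpha}(x)=1$, which is what Korovkin's theorem actually requires.
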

			\begin{proof}
				Since $A_{n,\alpha}^{\beta}(e_0;x)=1, A_{n,\alpha}^{\beta}(e_1;x)=x.$ Now, in order to prove the theorem, we need to show that $\displaystyle\lim_{n\rightarrow \infty}A_{n,\alpha}^{\beta}(e_2;x)=x^2.$\\
				Consider $\mu_{n,k}(t)=\dfrac{t^2}{(nt(\beta+1)+1)^2}.$
				It is easy to check that $\displaystyle\lim_{n\rightarrow \infty}\mu_{n,k}(t)=0.$
Thus,
$$A_{n,\alpha}^{\beta}(e_2;x)=x^2+\dfrac{n+2(1-\alpha)}{n^2}x(1-x)+\dfrac{\beta}{2+\beta}T_{n,\alpha}\left(\mu_{n,k}(t;x)\right).$$
Using these, we obtain $$\displaystyle\lim_{n\rightarrow\infty}A_{n,\alpha}^{\beta}(e_2;x)=x^2.$$
As $T_{n,\alpha}(f;x)\rightarrow f(x)$ as $n\rightarrow \infty.$
Hence, by using the Bohman-Korovkin theorem, we reach at the required result.
			\end{proof}
			\begin{corollary}
				The operators $A_{n,\alpha}^{\beta}(f;x)$ reproduces the linear polynomials that is $A_{n,\alpha}^{\beta}(at+b;x)=ax+b,$ where $a,b\in\mathbb{R}.$
			\end{corollary}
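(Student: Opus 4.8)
The plan is to observe that the claim is an immediate consequence of the linearity of the operator together with the first two moment identities already recorded in Lemma \ref{lemma1}. First I would write an arbitrary affine function as a linear combination of the test functions $e_0$ and $e_1$, namely $at+b = a\,e_1(t) + b\,e_0(t)$, so that the problem reduces entirely to knowing how $A_{n,\alpha}^{\beta}$ acts on these two monomials.

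Next I would invoke the linearity of $A_{n,\alpha}^{\beta}$, which follows directly from its definition in \eqref{op1} as a finite sum of integrals against the fixed basis functions $b_{n,k}^{\alpha}(x)$ and the kernel $(1-t)^{\beta-1}/\Gamma(\beta)$; none of these depend on $f$, so $A_{n,\alpha}^{\beta}(af+bg;x) = a\,A_{n,\alpha}^{\beta}(f;x) + b\,A_{n,\alpha}^{\beta}(g;x)$. Applying this with $f=e_1$ and $g=e_0$ gives $A_{n,\alpha}^{\beta}(at+b;x) = a\,A_{n,\alpha}^{\beta}(e_1;x) + b\,A_{n,\alpha}^{\beta}(e_0;x)$.

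Finally I would substitute the values $A_{n,\alpha}^{\beta}(e_0;x)=1$ and $A_{n,\alpha}^{\beta}(e_1;x)=x$ from Lemma \ref{lemma1}, which yields $A_{n,\alpha}^{\beta}(at+b;x) = ax + b$, as required. There is no genuine obstacle here: the entire content of the statement is already encoded in the choice of the scaling factors $a_{n,k}$, which were determined precisely so that $A_{n,\alpha}^{\beta}(e_i;x)=e_i$ for $i=0,1$. In that sense the corollary is merely a restatement of the defining property of the modified operators, and the only thing to check is that linear combinations are handled correctly, which is immediate from linearity.
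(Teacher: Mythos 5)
Your argument is correct and is exactly the one the paper intends: the corollary follows immediately from the linearity of $A_{n,\alpha}^{\beta}$ together with the identities $A_{n,\alpha}^{\beta}(e_0;x)=1$ and $A_{n,\alpha}^{\beta}(e_1;x)=x$ of Lemma \ref{lemma1}, which themselves encode the choice of $a_{n,k}$. The paper omits the proof entirely, and your write-up supplies precisely the routine verification that is being left to the reader.
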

			\section{Bivariate of the operators}\label{bivariate}
			In practical scenarios containing multiple variables such as inventory levels, customer expenditures, market values of the stocks etc., the applications of mathematical methods are essential. Frequently, these models include functions those are dependent upon two or more variables. They can be seen in many other subjects as Fluid Dynamics, Economics, Continuum Mechanics, and Thermodynamics. This showcases the importance of the multi-variable functions that are used to understand and solve complex issues in various domains. These applications motivate the researchers to find the positive linear operators that can approximate such kinds of functions. 
			We define the bivariate of the operators \eqref{op} for $f\in C(I^2),$ where $I^2=[0,1]\times[0,1]$ as follows:
			
\begin{align}\label{op2}
&K_{n,m,\alpha_{1},\alpha_2}^{\beta_1,\beta_2}(f;x,y)\\
&=\Gamma(\beta_1+1)\Gamma(\beta_2+1)\sum_{k=0}^n \sum_{j=0}^m b_{n,k}^{\alpha_1}(x)b_{m,j}^{\alpha_2}(y)\int_0^1\int_0^1\frac{(1-t)^{\beta_1-1}(1-s)^{\beta_2-1}}{\Gamma(\beta_1)\Gamma(\beta_2)}f\left(\frac{k+t}{n+1}, \frac{j+s}{m+1}\right)\, \mathrm{d}s \,\mathrm{d}t,\nonumber
\end{align}
where $(x,y)\in I^2, \beta_1, \beta_2\in (0,1].$ When $\beta_1=\beta_2=1$ we get back the classical $\alpha$-Bernstein Kantorovich operators. Moreover, by taking $\alpha_1=\alpha_2=\beta_1=\beta_2=1,$ the operators reduced back to Bernstein-Kantorovich operators.
Due to the importance of the moments to find the results of positive linear operators, we first compute the moments for the operators \eqref{op2}.
\begin{lemma}\label{lemma3} The operators \eqref{op2} satisfy the following identities:
\begin{align*}
K_{n,m,\alpha_{1},\alpha_2}^{\beta_1,\beta_2}(e_{00};x,y)=&1;\\
K_{n,m,\alpha_{1},\alpha_2}^{\beta_1,\beta_2}(e_{10};x,y)=&\dfrac{nx}{n+1}+\dfrac{1}{(n+1)(\beta_1+1)};\\
K_{n,m,\alpha_{1},\alpha_2}^{\beta_1,\beta_2}(e_{01};x,y)=&\dfrac{my}{m+1}+\dfrac{1}{(m+1)(\beta_2+1)};\\
K_{n,m,\alpha_1,\alpha_2}^{\beta_1,\beta_2}(e_{20};x,y)=&\dfrac{n^2}{(n+1)^2}\left[x^2+\dfrac{n+2(1-\alpha_1)}{n^2}x(1-x)\right]\\
&+\dfrac{2nx}{(\beta_1+1)(n+1)^2}+\dfrac{2}{(n+1)^2(2+\beta_1)(1+\beta_1)};\\
K_{n,m,\alpha_1,\alpha_2}^{\beta_1,\beta_2}(e_{02};x,y)=&\dfrac{m^2}{(m+1)^2}\left[y^2+\dfrac{m+2(1-\alpha_2)}{m^2}y(1-y)\right]\\
&+\dfrac{2my}{(\beta_2+1)(m+1)^2}+\dfrac{2}{(m+1)^2(2+\beta_2)(1+\beta_2)};
\end{align*}
\end{lemma}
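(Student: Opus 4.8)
The plan is to exploit the tensor-product structure of the operator \eqref{op2}. Since the monomial $e_{ij}(u,v)=u^iv^j$ separates as a product of one-variable functions, and since the double integral, the product of Beta-type weights, and the product of basis functions $b_{n,k}^{\alpha_1}(x)\,b_{m,j}^{\alpha_2}(y)$ all split along the two variables, the bivariate operator factorizes as
\begin{equation*}
K_{n,m,\alpha_1,\alpha_2}^{\beta_1,\beta_2}(e_{ij};x,y)=K_{n,\alpha_1}^{\beta_1}(e_i;x)\cdot K_{m,\alpha_2}^{\beta_2}(e_j;y),
\end{equation*}
where $K_{n,\alpha}^{\beta}$ is the univariate operator \eqref{op} and $e_i(u)=u^i$. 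Every identity in the lemma thus reduces to a product of univariate moments: $e_{00}$ gives $1\cdot 1$, $e_{10}$ gives $K_{n,\alpha_1}^{\beta_1}(e_1;x)\cdot 1$, $e_{20}$ gives $K_{n,\alpha_1}^{\beta_1}(e_2;x)\cdot 1$, and symmetrically for the $y$-entries. It therefore suffices to compute $K_{n,\alpha}^{\beta}(e_r;x)$ for $r=0,1,2$.

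For these univariate moments I would first record the Beta-function values
\begin{equation*}
\Gamma(\beta+1)\int_0^1\frac{(1-t)^{\beta-1}}{\Gamma(\beta)}\,t^r\,\mathrm{d}t=\beta\,B(r+1,\beta)=\frac{r!}{(\beta+1)\cdots(\beta+r)},
\end{equation*}
so that $r=0,1,2$ return $1$, $\tfrac{1}{\beta+1}$ and $\tfrac{2}{(\beta+1)(\beta+2)}$ respectively. Substituting $e_r$ into \eqref{op}, writing $\bigl(\tfrac{k+t}{n+1}\bigr)^r=(n+1)^{-r}(k+t)^r$ and expanding $(k+t)^r$ by the binomial theorem reduces each summand to a combination of the quantities $\sum_{k=0}^n k^p\,b_{n,k}^{\alpha}(x)$. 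These are precisely $n^p$ times the moments of the $\alpha$-Bernstein operators, namely $T_{n,\alpha}(e_0;x)=1$, $T_{n,\alpha}(e_1;x)=x$ and $T_{n,\alpha}(e_2;x)=x^2+\tfrac{n+2(1-\alpha)}{n^2}x(1-x)$, the last of which is the second moment already used implicitly in Lemma \ref{lemma1}.

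Assembling these pieces, the constant case gives $K_{n,\alpha}^{\beta}(e_0;x)=1$; the linear case gives $\tfrac{1}{n+1}\bigl[nx+\tfrac{1}{\beta+1}\bigr]$, which is the stated $e_{10}$-entry; and the quadratic case, after expanding $(k+t)^2=k^2+2kt+t^2$ and collecting the three Beta values against the three $\alpha$-Bernstein moments, yields
\begin{equation*}
K_{n,\alpha}^{\beta}(e_2;x)=\frac{n^2}{(n+1)^2}\Bigl[x^2+\tfrac{n+2(1-\alpha)}{n^2}x(1-x)\Bigr]+\frac{2nx}{(\beta+1)(n+1)^2}+\frac{2}{(n+1)^2(2+\beta)(1+\beta)},
\end{equation*}
matching the $e_{20}$-entry. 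The $y$-variable identities follow by the identical argument with $(n,\alpha_1,\beta_1)$ replaced by $(m,\alpha_2,\beta_2)$. The only step that requires genuine care is this quadratic moment, since the cross term $2kt$ couples the first $\alpha$-Bernstein moment $\sum_k k\,b_{n,k}^{\alpha}(x)=nx$ to the $r=1$ Beta value $\tfrac{1}{\beta+1}$; everything else is routine bookkeeping once the factorization and the three Beta integrals are in hand.
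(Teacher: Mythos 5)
Your proposal is correct: the tensor-product factorization is legitimate because the basis functions, weights, and integrals in \eqref{op2} all separate in the two variables, and your three Beta values $1$, $\tfrac{1}{\beta+1}$, $\tfrac{2}{(\beta+1)(\beta+2)}$ combined with the $\alpha$-Bernstein moments $\sum_k k^p b_{n,k}^{\alpha}(x)$ reproduce every entry of the lemma exactly. The paper states this lemma without proof, and your argument is precisely the routine computation it leaves implicit, so there is nothing to contrast.
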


\begin{thm}\label{thm2}
For every $f\in C(I^2),$ the operators \eqref{op2} converges uniformly to $f(x,y).$
\end{thm}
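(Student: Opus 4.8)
The plan is to invoke the bivariate version of the Bohman--Korovkin theorem (Volkov's theorem) on the compact square $I^2$, which reduces the uniform approximation of an arbitrary $f\in C(I^2)$ to verifying convergence only on the finite Korovkin test system $\{e_{00},\,e_{10},\,e_{01},\,e_{20}+e_{02}\}$. Since the operators $K_{n,m,\alpha_1,\alpha_2}^{\beta_1,\beta_2}$ are positive and linear, it suffices to establish that each of these four test functions is approximated uniformly on $I^2$ as $n,m\to\infty$, and all the required data are already contained in Lemma \ref{lemma3}.

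First I would dispose of the zeroth- and first-order conditions. From Lemma \ref{lemma3} one has $K_{n,m,\alpha_1,\alpha_2}^{\beta_1,\beta_2}(e_{00};x,y)=1$ identically, so the first condition holds with zero error. For $e_{10}$, the explicit formula yields $|K_{n,m,\alpha_1,\alpha_2}^{\beta_1,\beta_2}(e_{10};x,y)-x|\le \frac{x}{n+1}+\frac{1}{(n+1)(\beta_1+1)}\le \frac{1}{n+1}\bigl(1+\frac{1}{\beta_1+1}\bigr)$, a bound independent of $(x,y)$ that tends to $0$ as $n\to\infty$; the symmetric estimate handles $e_{01}$ with $m\to\infty$.

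For the second-order test function I would combine the formulas for $K_{n,m,\alpha_1,\alpha_2}^{\beta_1,\beta_2}(e_{20};x,y)$ and $K_{n,m,\alpha_1,\alpha_2}^{\beta_1,\beta_2}(e_{02};x,y)$. Writing out the first of these, the leading term $\frac{n^2}{(n+1)^2}x^2$ converges to $x^2$, the mixed term carries a factor $\frac{n+2(1-\alpha_1)}{(n+1)^2}=O(1/n)$ multiplying $x(1-x)\le \frac14$, and the two remaining terms are $O(1/n)$ as well; hence $K_{n,m,\alpha_1,\alpha_2}^{\beta_1,\beta_2}(e_{20};x,y)\to x^2$ and likewise $K_{n,m,\alpha_1,\alpha_2}^{\beta_1,\beta_2}(e_{02};x,y)\to y^2$, giving $K_{n,m,\alpha_1,\alpha_2}^{\beta_1,\beta_2}(e_{20}+e_{02};x,y)\to x^2+y^2$ uniformly on $I^2$.

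With all four Korovkin conditions verified, the conclusion follows directly from Volkov's theorem. I do not anticipate a genuine obstacle; the only point meriting care is that every error bound must be uniform in both variables at once, i.e.\ it may depend on $n,m$ and the fixed parameters $\alpha_i,\beta_i$ but not on the running point $(x,y)$. This uniformity is guaranteed precisely because the coefficients $x$, $x(1-x)$, $y$, and $y(1-y)$ are bounded on the compact square $I^2$, so that all the $O(1/n)$ and $O(1/m)$ remainders collapse to zero uniformly.
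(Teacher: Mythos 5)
Your proposal is correct and follows exactly the route the paper intends: the paper's proof consists of the single remark that the result follows from Lemma \ref{lemma3} and Volkov's theorem, and your argument simply supplies the omitted verification of the Korovkin test functions $e_{00}, e_{10}, e_{01}, e_{20}+e_{02}$ from the moment formulas, with the correct observation that all remainders are uniform on the compact square. No gap; your write-up is a fuller version of the same proof.
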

			\begin{proof}
				The proof is straightforward by using Lemma \ref{lemma3} and Volkov's theorem. Hence, we omit the details.
			\end{proof}
			\begin{definition}\label{complete} (\cite{anastassiou2012approx}, p.80)
				For any $g\in C(I^2),$ the complete modulus of continuity is defined as
				\begin{eqnarray*}
					\omega(g;\delta_1,\delta_2)&=&\sup\{\mid g(u,v)-g(x,y)\mid:(u,v),(x,y)\in I^2 ,\,\mid u-x\mid\leq \delta_1,\,\mid v-y\mid\leq \delta_2\},\\
					\mbox{or} \quad \omega(g;\delta)
					&=&\sup\{\mid g(u,v)-g(x,y)\mid: \sqrt{(u-x)^2+(v-y)^2}\leq \delta, \,(u,v),(x,y)\in I^2\},
				\end{eqnarray*}
			\end{definition}
			\begin{thm}\label{complete}
				Let $f\in C(I^2)$ and $\beta_1,\beta_2\in (0,1].$ Then for all $(x,y)\in I^2,$ we have
				$$\mid K_{n,m,\alpha_{1},\alpha_2}^{\beta_1,\beta_2}(f;x,y)-f(x,y)\mid \leq 4\omega(f;\delta_{n,\alpha_1,\beta_1}^2(x),\delta_{m,\alpha_2,\beta_2}^2(y)),$$
				where $\delta_{n,\alpha_1,\beta_1}^2(x):=K_{n,m,\alpha_{1},\alpha_2}^{\beta_1,\beta_2}((t-x)^2;x,y)$ and $\delta_{m,\alpha_2,\beta_2}^2(y):=K_{n,m,\alpha_{1},\alpha_2}^{\beta_1,\beta_2}((s-y)^2;x,y).$
		 \end{thm}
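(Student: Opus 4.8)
The plan is to combine the constant-reproducing property and the positivity of the operator with the standard two-dimensional modulus-of-continuity inequality, i.e. a Shisha--Mond type argument adapted to the bivariate setting.

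First I would use Lemma~\ref{lemma3}, which yields $K_{n,m,\alpha_{1},\alpha_2}^{\beta_1,\beta_2}(e_{00};x,y)=1$, to rewrite the error as the operator acting on the difference, and then pass the absolute value inside by positivity and linearity:
$$\bigl|K_{n,m,\alpha_{1},\alpha_2}^{\beta_1,\beta_2}(f;x,y)-f(x,y)\bigr|=\bigl|K_{n,m,\alpha_{1},\alpha_2}^{\beta_1,\beta_2}\!\bigl(f(t,s)-f(x,y);x,y\bigr)\bigr|\le K_{n,m,\alpha_{1},\alpha_2}^{\beta_1,\beta_2}\!\bigl(|f(t,s)-f(x,y)|;x,y\bigr).$$

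Next, for arbitrary $\delta_1,\delta_2>0$ I would invoke the basic estimate for the complete modulus of continuity of Definition~\ref{complete},
$$|f(t,s)-f(x,y)|\le\omega(f;\delta_1,\delta_2)\left(1+\frac{|t-x|}{\delta_1}\right)\left(1+\frac{|s-y|}{\delta_2}\right),$$
substitute it into the previous line, and expand the product into four pieces. The constant piece equals exactly $1$ since $K(e_{00})=1$; the two degree-one pieces $K(|t-x|;x,y)$ and $K(|s-y|;x,y)$ I would control by the Cauchy--Schwarz inequality against the second central moments, e.g.
$$K_{n,m,\alpha_{1},\alpha_2}^{\beta_1,\beta_2}(|t-x|;x,y)\le\Bigl(K_{n,m,\alpha_{1},\alpha_2}^{\beta_1,\beta_2}((t-x)^2;x,y)\Bigr)^{1/2}=\delta_{n,\alpha_1,\beta_1}(x),$$
and similarly for the $s$-variable. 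For the remaining cross piece I would exploit the tensor-product form of the weight in \eqref{op2}: because the kernel factors into a $t$-dependent and an $s$-dependent part, applying the operator to $|t-x|\,|s-y|$ splits into the product of the two marginal one-variable quantities, each of which is again estimated by Cauchy--Schwarz.

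Finally, I would specialise $\delta_1$ and $\delta_2$ to the square roots $\delta_{n,\alpha_1,\beta_1}(x)$ and $\delta_{m,\alpha_2,\beta_2}(y)$ of the second central moments $\delta_{n,\alpha_1,\beta_1}^2(x)$ and $\delta_{m,\alpha_2,\beta_2}^2(y)$ furnished by Lemma~\ref{lemma3}. With this calibration each of the four expanded terms is bounded by $1$, so their sum is bounded by $4$, producing the factor $4\,\omega(f;\delta_{n,\alpha_1,\beta_1}(x),\delta_{m,\alpha_2,\beta_2}(y))$ asserted in the statement. The step demanding the most care is the cross term: one must justify that applying the bivariate operator to $|t-x|\,|s-y|$ genuinely factors into the product of the marginal operators before Cauchy--Schwarz is applied in each variable, and this is precisely where the explicit product structure of \eqref{op2} is indispensable.
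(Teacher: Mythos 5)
Your argument is the same Shisha--Mond type argument the paper uses: bound the error by $K_{n,m,\alpha_{1},\alpha_2}^{\beta_1,\beta_2}(|f(t,s)-f(x,y)|;x,y)$, insert the two-parameter modulus inequality $|f(t,s)-f(x,y)|\le\omega(f;\delta_1,\delta_2)\bigl(1+\tfrac{|t-x|}{\delta_1}\bigr)\bigl(1+\tfrac{|s-y|}{\delta_2}\bigr)$, and control the first absolute moments by Cauchy--Schwarz. Your handling of the cross term is more careful than the paper's (which simply writes the product of the two bracketed factors without comment); your appeal to the tensor-product structure of \eqref{op2} is a legitimate justification, and an alternative that needs no product structure is to apply Cauchy--Schwarz directly to $K(|t-x|\,|s-y|;x,y)\le\sqrt{K((t-x)^2;x,y)\,K((s-y)^2;x,y)}$.

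The one substantive discrepancy is in the final calibration, and it is worth flagging because it exposes a defect in the paper rather than in your argument. Cauchy--Schwarz gives $K(|t-x|;x,y)\le\sqrt{K((t-x)^2;x,y)}$, so the natural choice making each ratio equal to $1$ is $\delta_1=\sqrt{K((t-x)^2;x,y)}=\delta_{n,\alpha_1,\beta_1}(x)$, which is exactly what you do; you therefore obtain $4\,\omega\bigl(f;\delta_{n,\alpha_1,\beta_1}(x),\delta_{m,\alpha_2,\beta_2}(y)\bigr)$, i.e.\ $\omega$ evaluated at the \emph{square roots} of the second central moments. The theorem as stated, and the paper's proof, instead put the second moments $\delta^2$ themselves inside $\omega$; the paper reaches this only by silently dropping the square root after invoking Cauchy--Schwarz (its penultimate display replaces $\sqrt{K((t-x)^2)}$ by $K((t-x)^2)$) and then choosing $\delta_1=K((t-x)^2;x,y)$. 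With that choice the true ratio $K(|t-x|)/\delta_1$ is not bounded by $1$ (indeed it can blow up as the second moment tends to $0$), so the statement in the form printed is not actually established. Your version, with the square roots, is the correct classical estimate; the mismatch is the paper's, not yours.
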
	
		 \begin{proof}
		 	Consider 
		 	\begin{eqnarray*}
		 		\mid K_{n,m,\alpha_{1},\alpha_2}^{\beta_1,\beta_2}(f;x,y)-f(x,y)\mid &\leq & K_{n,m,\alpha_{1},\alpha_2}^{\beta_1,\beta_2}(\mid f(t,s)-f(x,y)\mid;x,y)\\
		 		&\leq &K_{n,m,\alpha_{1},\alpha_2}^{\beta_1,\beta_2}\left(\left(1+\frac{\mid t-x \mid}{\delta_1}\right)\left(1+\frac{\mid s-y \mid }{\delta_2}\right)\omega(f;\delta_1,\delta_2);x,y\right)\\
		 		&\leq & \omega(f;\delta_1,\delta_2)\left[1+\frac{K_{n,m,\alpha_{1},\alpha_2}^{\beta_1,\beta_2}(\mid t-x\mid;x,y)}{\delta_1}\right]\\
		 		&&\times\left[1+\frac{K_{n,m,\alpha_{1},\alpha_2}^{\beta_1,\beta_2}(\mid s-y \mid;x,y)}{\delta_2}\right].
		 	\end{eqnarray*}
		 	Using the Cauchy-Schwarz inequality, we obtain
		 	$$K_{n,m,\alpha_{1},\alpha_2}^{\beta_1,\beta_2}(\mid t-x\mid;x,y)\leq \sqrt{K_{n,m,\alpha_{1},\alpha_2}^{\beta_1,\beta_2}((t-x)^2;x,y)},$$
		 	$${K_{n,m,\alpha_{1},\alpha_2}^{\beta_1,\beta_2}(\mid s-y \mid;x,y)}\leq \sqrt{K_{n,m,\alpha_{1},\alpha_2}^{\beta_1,\beta_2}((s-y)^2;x,y)}.$$
Therefore 
\begin{align*}
    &\mid K_{n,m,\alpha_{1},\alpha_2}^{\beta_1,\beta_2}(f;x,y)-f(x,y)\mid \\
    &\qquad\qquad\leq \omega(f;\delta_1,\delta_2) \left(1+\frac{K_{n,m,\alpha_{1},\alpha_2}^{\beta_1,\beta_2}((t-x)^2;x,y)}{\delta_1}\right)\left(1+\frac{K_{n,m,\alpha_{1},\alpha_2}^{\beta_1,\beta_2}((s-y)^2;x,y)}{\delta_2}\right).
\end{align*}
		 	
		 	Now, by choosing $\delta_1=\delta_{n,\alpha_1,\beta_1}^2(x)$ and $\delta_2=\delta_{m,\alpha_2,\beta_2}^2(y),$ we get the required inequality.
		 \end{proof}
		 \begin{definition}\label{partial} (\cite{anastassiou2012approx}, p.81)
		 	For any $g\in C(I^2)$, the partial modulus of continuity is defined by
		 	\begin{eqnarray*}
		 		\omega^1(g;\delta_1)&=&\sup\{\mid g(x_1,y)-g(x_2,y)\mid: y\in I,\mid x_1-x_2\mid \leq \delta_1\},\\
		 		\mbox{and}\quad
		 		\omega^2(g;\delta_2)&=&\sup\{\mid g(x,y_1)-g(x,y_2)\mid: x\in I, \mid y_1-y_2 \mid \leq \delta_2\}.
		 	\end{eqnarray*}
		 \end{definition}
		 \begin{thm}\label{partial}
		 		Let $f\in C(I^2)$ and $\beta_1,\beta_2\in (0,1].$ Then for all $(x,y)\in I^2,$ we have
		 	$$\mid K_{n,m,\alpha_{1},\alpha_2}^{\beta_1,\beta_2}(f;x,y)-f(x,y)\mid \leq 2[\omega^1(f;\delta_{n,\alpha_1,\beta_1}^2(x))+\omega^2(f;\delta_{m,\alpha_2,\beta_2}^2(y))],$$
		 	where $\delta_{n,\alpha_1,\beta_1}^2(x)$ and $\delta_{m,\alpha_2,\beta_2}^2(y)$ are as defined in Theorem \ref{complete}.
		 \end{thm}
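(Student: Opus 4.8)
The plan is to follow the same scheme as the proof of Theorem~\ref{complete}, but to split the bivariate increment through an intermediate point so that each piece is governed by a \emph{partial} modulus (Definition~\ref{partial}) rather than the complete one; this is precisely what turns the multiplicative estimate of Theorem~\ref{complete} into the additive one claimed here. First I would invoke positivity, linearity, and $K_{n,m,\alpha_{1},\alpha_2}^{\beta_1,\beta_2}(e_{00};x,y)=1$ to reduce matters to estimating $K_{n,m,\alpha_{1},\alpha_2}^{\beta_1,\beta_2}(|f(t,s)-f(x,y)|;x,y)$, and then insert the intermediate value $f(x,s)$:
$$|f(t,s)-f(x,y)|\leq |f(t,s)-f(x,s)|+|f(x,s)-f(x,y)|\leq \omega^1(f;|t-x|)+\omega^2(f;|s-y|),$$
where the first bound uses that only the first coordinate moves (with $s$ held fixed) and the second that only the second coordinate moves (with $x$ held fixed).

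Next I would apply the standard sublinearity property $\omega^i(f;\lambda\delta)\leq(1+\lambda)\,\omega^i(f;\delta)$, giving $\omega^1(f;|t-x|)\leq(1+|t-x|/\delta_1)\,\omega^1(f;\delta_1)$ and the analogous inequality for $\omega^2$. Applying the operator term by term, and again using $K_{n,m,\alpha_{1},\alpha_2}^{\beta_1,\beta_2}(e_{00};x,y)=1$, produces
$$|K_{n,m,\alpha_{1},\alpha_2}^{\beta_1,\beta_2}(f;x,y)-f(x,y)|\leq \omega^1(f;\delta_1)\left(1+\frac{K_{n,m,\alpha_{1},\alpha_2}^{\beta_1,\beta_2}(|t-x|;x,y)}{\delta_1}\right)+\omega^2(f;\delta_2)\left(1+\frac{K_{n,m,\alpha_{1},\alpha_2}^{\beta_1,\beta_2}(|s-y|;x,y)}{\delta_2}\right).$$

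Finally, exactly as in Theorem~\ref{complete}, the Cauchy--Schwarz inequality controls $K_{n,m,\alpha_{1},\alpha_2}^{\beta_1,\beta_2}(|t-x|;x,y)$ in terms of $\delta_{n,\alpha_1,\beta_1}^2(x)=K_{n,m,\alpha_{1},\alpha_2}^{\beta_1,\beta_2}((t-x)^2;x,y)$ and the $s$-term in terms of $\delta_{m,\alpha_2,\beta_2}^2(y)$; choosing $\delta_1=\delta_{n,\alpha_1,\beta_1}^2(x)$ and $\delta_2=\delta_{m,\alpha_2,\beta_2}^2(y)$ then collapses each parenthesis to $1+1=2$ and yields the stated inequality, with the explicit second moments supplied by Lemma~\ref{lemma3}. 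The computation is routine once those moments are in hand; the only genuinely new ingredient compared with Theorem~\ref{complete} is the intermediate-point insertion, which is the single step I would be most careful about, since it is what decouples the two variables and is responsible for the additive rather than multiplicative form of the bound.
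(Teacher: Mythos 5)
Your proposal is essentially identical to the paper's proof: the same insertion of the intermediate value $f(x,s)$, the same sublinearity bound on the partial moduli, the same Cauchy--Schwarz step, and the same final choice $\delta_1=\delta_{n,\alpha_1,\beta_1}^2(x)$, $\delta_2=\delta_{m,\alpha_2,\beta_2}^2(y)$. You even reproduce the paper's small notational quirk, since with $\delta_1=K_{n,m,\alpha_{1},\alpha_2}^{\beta_1,\beta_2}((t-x)^2;x,y)$ the ratio $\sqrt{\delta_1}/\delta_1$ equals $1/\sqrt{\delta_1}$ rather than $1$, but this is exactly how the paper argues as well.
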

		 \begin{proof}
		 	Using the linearity of the operators, we get
\begin{align*}
&\mid K_{n,m,\alpha_{1},\alpha_2}^{\beta_1,\beta_2}(f;x,y)-f(x,y)\mid\\
&\quad\leq K_{n,m,\alpha_{1},\alpha_2}^{\beta_1,\beta_2}(\mid f(t,s)-f(x,s)\mid;x,y)+K_{n,m,\alpha_{1},\alpha_2}^{\beta_1,\beta_2}(\mid f(x,s)-f(x,y)\mid;x,y)\\
&\quad\leq K_{n,m,\alpha_{1},\alpha_2}^{\beta_1,\beta_2}(\omega^1(f;\mid t-x \mid);x,y)+K_{n,m,\alpha_{1},\alpha_2}^{\beta_1,\beta_2}(\omega^2(f;\mid s-y \mid);x,y)\\
&\quad \leq  \left(1+\frac{\sqrt{K_{n,m,\alpha_{1},\alpha_2}^{\beta_1,\beta_2}((t-x)^2;x,y)}}{\delta_1}\right)\omega^1(f;\delta_1)+\left(1+\frac{\sqrt{K_{n,m,\alpha_{1},\alpha_2}^{\beta_1,\beta_2}((s-y)^2;x,y)}}{\delta_2}\right)\omega^2(f;\delta_2).    
\end{align*}
On choosing $\delta_1:=\delta^2_{n,\alpha_1,\beta_1}(x)$ and $\delta_2:=\delta^2_{m,\alpha_2,\beta_2}(y),$ we reach at the desired outcome.
		 \end{proof}
		 \begin{thm}
		 	For any $f\in C^2(I^2)$ and $(x,y)\in I^2$ we obtain:
		 	\begin{eqnarray*}
		 	\lim_{n\rightarrow \infty}n (K_{n,n,\alpha_{1},\alpha_2}^{\beta_1,\beta_2}(f;x,y)-f(x,y))&=&\left(-x+\dfrac{1}{\beta_1+1}\right)f_x(x,y)+\left(-y+\dfrac{1}{\beta_2+1}\right)f_y(x,y)\\
		 	&& +\frac{1}{2}\left[x(1-x)f_{xx}(x,y)+y(1-y)f_{yy}(x,y)\right].
		 	\end{eqnarray*}
		 \end{thm}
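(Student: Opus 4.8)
The plan is to run the standard Voronovskaya argument adapted to two variables. For brevity write $L_n$ for the operator $K_{n,n,\alpha_{1},\alpha_2}^{\beta_1,\beta_2}$. Since $f\in C^2(I^2)$, Taylor's formula about $(x,y)$ gives
\[
f(t,s)=f(x,y)+f_x(t-x)+f_y(s-y)+\tfrac12 f_{xx}(t-x)^2+f_{xy}(t-x)(s-y)+\tfrac12 f_{yy}(s-y)^2+R(t,s),
\]
where the partial derivatives are evaluated at $(x,y)$ and the remainder has the form $R(t,s)=\psi(t,s)\big((t-x)^2+(s-y)^2\big)$ with $\psi\in C(I^2)$ bounded and $\psi(x,y)=0$. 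Applying $L_n$, using its linearity together with $L_n(e_{00};x,y)=1$, reduces the problem to evaluating the central moments $M_{pq}:=L_n\big((t-x)^p(s-y)^q;x,y\big)$ and the remainder contribution $L_n(R;x,y)$.

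First I would extract the central moments from Lemma \ref{lemma3} by expanding the monomials and using linearity. A direct computation gives $M_{10}=\tfrac{1}{n+1}\big(-x+\tfrac{1}{\beta_1+1}\big)$ and $M_{01}=\tfrac{1}{n+1}\big(-y+\tfrac{1}{\beta_2+1}\big)$, so $nM_{10}$ and $nM_{01}$ tend to the two first-derivative coefficients in the statement. For the quadratic moments, the $x^2$-contributions in $L_n(e_{20};x,y)-2xL_n(e_{10};x,y)+x^2$ telescope to order $n^{-2}$, and the dominant surviving term is of order $n^{-1}$, yielding $nM_{20}\to x(1-x)$ and, symmetrically, $nM_{02}\to y(1-y)$; these produce the Hessian terms with the factor $\tfrac12$. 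Because the operator \eqref{op2} is a genuine tensor product in the two variables, the mixed moment factorises as $M_{11}=\big(\tfrac{1}{n+1}(-x+\tfrac{1}{\beta_1+1})\big)\big(\tfrac{1}{n+1}(-y+\tfrac{1}{\beta_2+1})\big)=O(n^{-2})$, so $nM_{11}\to 0$; this is precisely why no $f_{xy}$ term appears in the limit.

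The main obstacle is showing $n\,L_n(R;x,y)\to 0$. The tensor-product structure again helps: the fourth-order central moment $L_n\big(((t-x)^2+(s-y)^2)^2;x,y\big)$ expands into univariate fourth central moments and products of two univariate second central moments, all of which are $O(n^{-2})$, so the whole quantity is $O(n^{-2})$. Applying the Cauchy--Schwarz inequality for positive linear operators,
\[
|L_n(R;x,y)|\le \sqrt{L_n(\psi^2;x,y)}\,\sqrt{L_n\big(((t-x)^2+(s-y)^2)^2;x,y\big)}\le C\,n^{-1}\sqrt{L_n(\psi^2;x,y)}.
\]
Since $\psi^2\in C(I^2)$ with $\psi^2(x,y)=0$, Theorem \ref{thm2} gives $L_n(\psi^2;x,y)\to 0$, whence $n\,|L_n(R;x,y)|\le C\sqrt{L_n(\psi^2;x,y)}\to 0$. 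Multiplying the moment identity by $n$ and letting $n\to\infty$ then leaves exactly the four surviving terms, which is the claimed limit.
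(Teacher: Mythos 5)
Your proposal is correct and follows essentially the same route as the paper: Taylor expansion about $(x,y)$, evaluation of the first and second central moments from Lemma \ref{lemma3}, and a Cauchy--Schwarz estimate combined with Theorem \ref{thm2} and the $O(n^{-2})$ decay of the fourth-order central moments to kill the remainder. You actually supply more detail than the paper at two points it glosses over --- the explicit telescoping that gives $nM_{20}\to x(1-x)$, and the tensor-product factorisation showing $nM_{11}\to 0$ (hence no $f_{xy}$ term), as well as the fourth-moment bounds that the paper attributes to Lemma \ref{lemma3} without stating them there.
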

		 \begin{proof}
		 	Using the Taylor's formula for a fixed point $(x,y)\in I^2$
		 	\begin{eqnarray*}
		 		f(t,s)&=&f(x,y)+f_x(x,y)(t-x)+f_y(x,y)(s-y)+\frac{1}{2}\left[f_{xx}(x,y)(t-x)^2\right.\\
		 		&&+\left.2f_{xy}(x,y)(t-x)(s-y)+f_{yy}(x,y)(s-y)^2\right]+\Theta(t,s)\{(t-x)^2+(s-y)^2\},
		 	\end{eqnarray*}
		 	where $\displaystyle\lim_{(t,s)\rightarrow (x,y)} \Theta(t,s)=0$ and $(x,y)\in I^2.$\\
		 	Applying the operators and using its linearity property
		 	\begin{eqnarray}\label{eq9}
		 		n\left(K_{n,n,\alpha_{1},\alpha_2}^{\beta_1,\beta_2}(f;x,y)-f(x,y)\right)&=& n\bigg( f_x(x,y)K_{n,n,\alpha_{1},\alpha_2}^{\beta_1,\beta_2}(t-x;x,y)+f_y(x,y)J_{n,n,\rho_1,\rho_1}^{\alpha_1,\alpha_1}(s-y;x,y)\nonumber\\
		 		&&+\frac{1}{2}\left\{f_{xx}(x,y)K_{n,n,\alpha_{1},\alpha_2}^{\beta_1,\beta_2}((t-x)^2;x,y)\right.\nonumber\\
		 		&&+\left.2f_{xy}(x,y)K_{n,n,\alpha_{1},\alpha_2}^{\beta_1,\beta_2}((t-x)(s-y);x,y)\right.\nonumber\\
		 		&&+\left.f_{yy}(x,y)K_{n,n,\alpha_{1},\alpha_2}^{\beta_1,\beta_2}((s-y)^2;x,y)\right\}\nonumber\\
		 		&&+K_{n,n,\alpha_{1},\alpha_2}^{\beta_1,\beta_2}(\Theta(t,s)\{(t-x)^2+(s-y)^2\};x,y)\bigg).
		 	\end{eqnarray}
		 	Apply Holder's inequality to the last term of right hand side of (\ref{eq9})\\
		 	$n K_{n,n,\alpha_{1},\alpha_2}^{\beta_1,\beta_2}(\Theta(t,s)\{(t-x)^2+(s-y)^2\};x,y)$
		 	\begin{eqnarray}\label{eq10}
		 		&\leq & n \left[K_{n,n,\alpha_{1},\alpha_2}^{\beta_1,\beta_2}({\Theta}^2(t,s);x,y)\right]^{\frac{1}{2}}.
		 		\left[K_{n,n,\alpha_{1},\alpha_2}^{\beta_1,\beta_2}(((t-x)^2+(s-y)^2)^2;x,y)\right]^{\frac{1}{2}}\nonumber\\
		 		&\leq &\left[K_{n,n,\alpha_{1},\alpha_2}^{\beta_1,\beta_2}({\Theta}^2(t,s);x,y)\right]^{\frac{1}{2}}\nonumber\\
		 		&&\times \sqrt{2}n \left[K_{n,n,\alpha_{1},\alpha_2}^{\beta_1,\beta_2}((t-x)^4;x,y)+J_{n,n,\rho_1,\rho_1}^{\alpha_1,\alpha_1}((s-y)^4;x,y)\right]^{\frac{1}{2}}
		 	\end{eqnarray}
		 	{\begin{flushright}{[$\because$ $(a+b)^2\leq 2(a^2+b^2)$].}\end{flushright}}
		 \noindent	By using Theorem \ref{thm2}, we get
		 	$$\lim_{n\rightarrow \infty} K_{n,n,\alpha_{1},\alpha_2}^{\beta_1,\beta_2}({\Theta}^2(t,s);x,y)=0.$$
		 	With the help of central moments of order 4 in Lemma \ref{lemma3}, we get
		 	$$\displaystyle\lim_{n\rightarrow \infty} n\, K_{n,n,\alpha_{1},\alpha_2}^{\beta_1,\beta_2}(\Theta(t,s)\{(t-x)^2+(s-y)^2\};x,y)=0.$$
		 	Using Lemma \ref{lemma3}, (\ref{eq9}) and (\ref{eq10}), we get our required result.
		 	\end{proof}
		 
		 \begin{definition}
		 	For $0< \gamma_1,\gamma_2 \leq 1,$ the Lipshitz class denoted by $Lip_M(\gamma_1,\gamma_2)$ for two variables is defined as
		 	$$Lip_M(\gamma_1,\gamma_2):=\left\{f:C(I^2):|f(t,s)-f(x,y)|\leq M |t-x|^{\gamma_1}|s-y|^{\gamma_2}\right\},$$
		 	where $M>0$ and $(t,s),(x,y)\in I^2.$
		 \end{definition}
		 \begin{thm}
		 	For $f\in Lip_M(\gamma_1,\gamma_2),$ we have the following inequality:
		 	$$\mid K_{n,m,\alpha_{1},\alpha_2}^{\beta_1,\beta_2}(f;x,y)-f(x,y)\mid \leq M[\delta^2_{n,\alpha_1,\beta_1}]^{\frac{\gamma_1}{2}}[\delta^2_{m,\alpha_2,\beta_2}]^{\frac{\gamma_2}{2}},$$
		 		where $\delta_{n,\alpha_1,\beta_1}^2(x)$ and $\delta_{m,\alpha_2,\beta_2}^2(y)$ are as defined in Theorem \ref{complete}.
		 \end{thm}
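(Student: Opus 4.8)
The plan is to follow the same opening moves as in the proofs of Theorems \ref{complete} and \ref{partial}: exploit the positivity and linearity of $K_{n,m,\alpha_{1},\alpha_2}^{\beta_1,\beta_2}$ to bring the absolute value inside the operator, then invoke the Lipschitz hypothesis, and finish with Hölder's inequality. First I would write, using $K_{n,m,\alpha_{1},\alpha_2}^{\beta_1,\beta_2}(e_{00};x,y)=1$ from Lemma \ref{lemma3},
\[
\bigl|K_{n,m,\alpha_{1},\alpha_2}^{\beta_1,\beta_2}(f;x,y)-f(x,y)\bigr|\leq K_{n,m,\alpha_{1},\alpha_2}^{\beta_1,\beta_2}\bigl(|f(t,s)-f(x,y)|;x,y\bigr),
\]
and then apply the membership $f\in Lip_M(\gamma_1,\gamma_2)$ to bound the integrand pointwise by $M|t-x|^{\gamma_1}|s-y|^{\gamma_2}$, giving
\[
\bigl|K_{n,m,\alpha_{1},\alpha_2}^{\beta_1,\beta_2}(f;x,y)-f(x,y)\bigr|\leq M\,K_{n,m,\alpha_{1},\alpha_2}^{\beta_1,\beta_2}\bigl(|t-x|^{\gamma_1}|s-y|^{\gamma_2};x,y\bigr).
\]

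The key structural observation is that the kernel in \eqref{op2} is a product of a factor depending only on $(t,x)$ and one depending only on $(s,y)$, so the bivariate operator is a genuine tensor product of the two univariate operators \eqref{op}. Since the function $|t-x|^{\gamma_1}|s-y|^{\gamma_2}$ is separable, it factors through the operator:
\[
K_{n,m,\alpha_{1},\alpha_2}^{\beta_1,\beta_2}\bigl(|t-x|^{\gamma_1}|s-y|^{\gamma_2};x,y\bigr)=K_{n,\alpha_1}^{\beta_1}\bigl(|t-x|^{\gamma_1};x\bigr)\cdot K_{m,\alpha_2}^{\beta_2}\bigl(|s-y|^{\gamma_2};y\bigr),
\]
reducing the problem to two independent univariate estimates. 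I would then treat each factor by Hölder's inequality for positive linear functionals, with exponent $p=2/\gamma_i\geq 1$ and conjugate $q=2/(2-\gamma_i)$. Because $\bigl(|t-x|^{\gamma_1}\bigr)^{2/\gamma_1}=(t-x)^2$ and $K_{n,\alpha_1}^{\beta_1}(e_0;x)=1$, the first factor obeys
\[
K_{n,\alpha_1}^{\beta_1}\bigl(|t-x|^{\gamma_1};x\bigr)\leq\Bigl[K_{n,\alpha_1}^{\beta_1}\bigl((t-x)^2;x\bigr)\Bigr]^{\gamma_1/2}=\bigl[\delta_{n,\alpha_1,\beta_1}^2(x)\bigr]^{\gamma_1/2},
\]
and symmetrically the $s$-factor is bounded by $\bigl[\delta_{m,\alpha_2,\beta_2}^2(y)\bigr]^{\gamma_2/2}$. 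Multiplying the two bounds and reinstating the constant $M$ produces precisely the claimed inequality.

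I do not expect a serious obstacle, since all moments needed, in particular the second central moments $\delta^2$, are already available from Lemma \ref{lemma3}, and the identity $K_{n,m,\alpha_{1},\alpha_2}^{\beta_1,\beta_2}(e_{00};x,y)=1$ makes the Hölder step clean (the $[K(e_0)]^{1/q}$ term is just $1$). The one point that merits care is the tensor-product factorization: one must check that the double integral genuinely splits, which it does because the weight $\frac{(1-t)^{\beta_1-1}(1-s)^{\beta_2-1}}{\Gamma(\beta_1)\Gamma(\beta_2)}$ and the evaluation node $\left(\frac{k+t}{n+1},\frac{j+s}{m+1}\right)$ separate the two variables cleanly. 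One should also note that the notation is consistent, namely $\delta_{n,\alpha_1,\beta_1}^2(x)=K_{n,m,\alpha_{1},\alpha_2}^{\beta_1,\beta_2}((t-x)^2;x,y)=K_{n,\alpha_1}^{\beta_1}((t-x)^2;x)$ by the same factorization, so the two forms agree. As an alternative to factoring, one could apply a single Hölder inequality directly to the bivariate operator with the exponent pair $(2/\gamma_1,2/\gamma_2)$ distributed over the separated variables, but the factorization route is the most transparent.
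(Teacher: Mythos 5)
Your proposal is correct and follows essentially the same route as the paper's proof: positivity plus the Lipschitz bound, factorization of the separable function $\mid t-x\mid^{\gamma_1}\mid s-y\mid^{\gamma_2}$ through the tensor-product structure of the operator, and H\"older's inequality with exponents $p=2/\gamma_i$, $q=2/(2-\gamma_i)$ applied to each factor. The only difference is cosmetic: the paper records the factorization step as an inequality without comment, whereas you justify it explicitly as an identity arising from the product form of the kernel, which is a slightly cleaner presentation of the same argument.
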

		 \begin{proof}
		 	Let $f\in Lip_M(\gamma_1,\gamma_2).$ Consider
\begin{align*}
\mid K_{n,m,\alpha_{1},\alpha_2}^{\beta_1,\beta_2}(f;x,y)-f(x,y)\mid 
&\leq K_{n,m,\alpha_{1},\alpha_2}^{\beta_1,\beta_2}(\mid f(t,s)-f(x,y)\mid;x,y)\\
& \leq MK_{n,m,\alpha_{1},\alpha_2}^{\beta_1,\beta_2}(\mid t-x\mid^{\gamma_1}\mid s-y \mid^{\gamma_2};x,y)\\
&\leq  M K_{n,m,\alpha_{1},\alpha_2}^{\beta_1,\beta_2}(\mid t-x\mid^{\gamma_1};x,y)\,K_{n,m,\alpha_{1},\alpha_2}^{\beta_1,\beta_2}(\mid s-y\mid^{\gamma_2};x,y).
	\end{align*}
By H\"older's inequality for $p=\frac{2}{\gamma_1}, q=\frac{2}{2-\gamma_1},$ we have
		 	$$K_{n,m,\alpha_{1},\alpha_2}^{\beta_1,\beta_2}(\mid t-x\mid^{\gamma_1};x,y)\leq \left(K_{n,m,\alpha_{1},\alpha_2}^{\beta_1,\beta_2}((t-x)^2;x,y)\right)^{\frac{\gamma_1}{2}}.$$
		 	Similarly for $p=\frac{2}{\gamma_2}, q=\frac{2}{2-\gamma_2},$ we have
		 	$$K_{n,m,\alpha_{1},\alpha_2}^{\beta_1,\beta_2}(\mid s-y\mid^{\gamma_1};x,y)\leq \left(K_{n,m,\alpha_{1},\alpha_2}^{\beta_1,\beta_2}((s-y)^2;x,y)\right)^{\frac{\gamma_2}{2}}.$$
		 	Hence, the result.
		 \end{proof}
		 \begin{definition}{\bf{Second order modulus of smoothness:}}
		 	For $g\in C(I^2)$
		 	$$\omega_2(f;\delta)=\sup\{|f(x+2u,y+2v)-2f(x+u,y+v)+f(x,y)|;(x,y),(x+2u,y+2v)\in I^2\}.$$
		 \end{definition}
		 Also, we have the relationship between K-functional and second order modulus of continuity \cite{butzer1}:
		 \begin{eqnarray}\label{eq8}
		 	K(f;\delta)\leq C\left[\omega_2(f;\sqrt{\delta})+min (1,\delta)\|f\|\right].
		 \end{eqnarray}
		 \begin{thm}\label{thm6}
		 	For $f\in C(I^2),$ we get the error estimation in terms of first and second order modulus of continuity
		 	\begin{eqnarray*}
		 		\mid K_{n,n,\alpha_{1},\alpha_2}^{\beta_1,\beta_2}(f;x,y)-f(x,y)\mid &\leq & 4 C\left[ \omega_{2}\left(f;\frac{1}{2}\sqrt{\Theta_{n,m,\alpha_1,\alpha_2}^{\beta_1,\beta_2}(x,y)}\right)+\min \left(1,\frac{1}{4}\Theta_{n,m,\alpha_1,\alpha_2}^{\beta_1,\beta_2}(x,y)\right)\right]\\
		 		&&+\omega(f;\mu_{n,m}(x,y)),\\
		 		\mbox{where}\quad \Theta_{n,m,\alpha_1,\alpha_2}^{\beta_1,\beta_2}(x,y)&=&
		 	\left[{K}_{n,m,\alpha_{1},\alpha_2}^{\beta_1,\beta_2}((t-x)^2;x,y)+\left(\dfrac{nx}{n+1}+\dfrac{1}{(\beta_1+1)(n+1)}\right)^{2}\right.\\
		 	&&+\left.{K}_{n,m,\alpha_{1},\alpha_2}^{\beta_1,\beta_2}((s-y)^2;x,y)+\left(\frac{my}{m+1}+\frac{1}{(\beta_2+1)(m+1)}\right)^{2}\right]\\
		 		\mbox{and}\quad \mu_{n,m}(x,y)&=& \left(\left(\dfrac{nx}{n+1}+\dfrac{1}{(\beta_1+1)(n+1)}-x\right)^{2}+\left(\frac{my}{m+1}+\frac{1}{(\beta_2+1)(m+1)}-y\right)^{2}\right)^{\frac{1}{2}},
		 	\end{eqnarray*}
		 	where $C>0$ is a constant.
		 \end{thm}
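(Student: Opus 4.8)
The plan is to run the classical auxiliary-operator (King-type) argument combined with the $K$-functional, which is the standard route for error estimates of this shape. The obstruction is that, by Lemma \ref{lemma3}, the operators do not reproduce linear functions: writing for brevity $K_{n,m}:=K_{n,m,\alpha_{1},\alpha_2}^{\beta_1,\beta_2}$ one has $K_{n,m}(e_{10};x,y)=u_n(x)$ and $K_{n,m}(e_{01};x,y)=v_m(y)$, where $u_n(x):=\tfrac{nx}{n+1}+\tfrac{1}{(\beta_1+1)(n+1)}$ and $v_m(y):=\tfrac{my}{m+1}+\tfrac{1}{(\beta_2+1)(m+1)}$. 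To neutralise the first moments I would introduce the auxiliary operators
$$\widehat{K}_{n,m}(f;x,y):=K_{n,m}(f;x,y)-f\bigl(u_n(x),v_m(y)\bigr)+f(x,y),$$
which reproduce $e_{00},e_{10},e_{01}$ by construction, so that the first central moments $\widehat{K}_{n,m}(t-x;x,y)$ and $\widehat{K}_{n,m}(s-y;x,y)$ both vanish.

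Next I split the error through the auxiliary operator:
$$\bigl|K_{n,m}(f;x,y)-f(x,y)\bigr|\le \bigl|\widehat{K}_{n,m}(f;x,y)-f(x,y)\bigr|+\bigl|f\bigl(u_n(x),v_m(y)\bigr)-f(x,y)\bigr|.$$
The second summand is handled at once by the complete modulus of continuity of Definition \ref{complete}: the Euclidean distance from $(u_n(x),v_m(y))$ to $(x,y)$ is precisely $\mu_{n,m}(x,y)$, so this term is bounded by $\omega(f;\mu_{n,m}(x,y))$, which is exactly the trailing term of the claimed estimate.

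For the auxiliary part I would invoke the $K$-functional. For any $g\in C^{2}(I^{2})$ write $\widehat{K}_{n,m}(f)-f=\widehat{K}_{n,m}(f-g)+(\widehat{K}_{n,m}(g)-g)+(g-f)$ evaluated at $(x,y)$. Since $\widehat{K}_{n,m}$ is the sum of the positive operator $K_{n,m}$ (with $K_{n,m}(e_{00})=1$) and two point evaluations, $|\widehat{K}_{n,m}(h;x,y)|\le 3\|h\|$, whence the $(f-g)$-contributions total at most $4\|f-g\|$. For the smooth part I use Taylor's formula about $(x,y)$; the linear terms are annihilated because the first central moments of $\widehat{K}_{n,m}$ vanish, leaving only the second-order remainder, which is bounded by $\|g\|_{C^2}\{(t-x)^2+(s-y)^2\}$ after the trick $2|t-x||s-y|\le (t-x)^2+(s-y)^2$. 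Estimating $\widehat{K}_{n,m}$ of this remainder componentwise—the positive $K_{n,m}$-part yielding $K_{n,m}((t-x)^2;x,y)+K_{n,m}((s-y)^2;x,y)$ and the point-evaluation part yielding the quadratic contributions in $u_n(x),v_m(y)$—assembles precisely the quantity $\Theta_{n,m,\alpha_1,\alpha_2}^{\beta_1,\beta_2}(x,y)$ displayed in the statement, so that $|\widehat{K}_{n,m}(g;x,y)-g(x,y)|\le \|g\|_{C^2}\,\Theta_{n,m,\alpha_1,\alpha_2}^{\beta_1,\beta_2}(x,y)$. Taking the infimum over $g\in C^2(I^2)$ gives $|\widehat{K}_{n,m}(f;x,y)-f(x,y)|\le 4\,K\!\left(f;\tfrac14\Theta_{n,m,\alpha_1,\alpha_2}^{\beta_1,\beta_2}(x,y)\right)$, and feeding $\delta=\tfrac14\Theta_{n,m,\alpha_1,\alpha_2}^{\beta_1,\beta_2}(x,y)$ into the relation \eqref{eq8} converts this into $4C\bigl[\omega_2\!\left(f;\tfrac12\sqrt{\Theta}\right)+\min(1,\tfrac14\Theta)\bigr]$, matching both the constant $4C$ and the argument $\tfrac12\sqrt{\Theta}$ of $\omega_2$.

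The main obstacle is the non-positivity of $\widehat{K}_{n,m}$: one cannot estimate its action on the Taylor remainder by positivity alone, and must instead split into the positive $K_{n,m}$-part and the two evaluations and track the second-order quantities so that exactly $\Theta_{n,m,\alpha_1,\alpha_2}^{\beta_1,\beta_2}(x,y)$ emerges from Lemma \ref{lemma3}. Once that bookkeeping is done, matching the factor $4$ and the scaling of the moduli is purely mechanical via the substitution $\delta=\tfrac14\Theta$ in \eqref{eq8}.
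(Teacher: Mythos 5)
Your proposal is correct and follows essentially the same route as the paper: the same auxiliary operator $\hat{K}_{n,m,\alpha_{1},\alpha_2}^{\beta_1,\beta_2}$ annihilating the first central moments, the same four-term decomposition through $g\in C^2(I^2)$ with the bound $|\hat{K}_{n,m,\alpha_{1},\alpha_2}^{\beta_1,\beta_2}(h;x,y)|\le 3\|h\|$, the same Taylor-remainder estimate producing $\Theta_{n,m,\alpha_1,\alpha_2}^{\beta_1,\beta_2}(x,y)\|g\|_{C^2}$, and the same passage to the $K$-functional via \eqref{eq8} with $\delta=\tfrac14\Theta_{n,m,\alpha_1,\alpha_2}^{\beta_1,\beta_2}(x,y)$. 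The only cosmetic difference is that the paper uses an integral-remainder Taylor expansion taken successively in each variable (so no mixed term appears), whereas you bound a cross term by $2|t-x||s-y|\le(t-x)^2+(s-y)^2$; both yield the same $\Theta$.
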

		 \begin{proof}
		 	Firstly, we define an auxiliary operators for $(x,y)\in I^2$ in the following way:
		 	\begin{eqnarray*}
		 		\hat{K}_{n,m,\alpha_{1},\alpha_2}^{\beta_1,\beta_2}(f;x,y)&=&{K}_{n,m,\alpha_{1},\alpha_2}^{\beta_1,\beta_2}(f;x,y)+f(x,y)\\
		 		&&-f\left(\dfrac{nx}{n+1}+\dfrac{1}{(\beta_1+1)(n+1)},\dfrac{my}{m+1}+\dfrac{1}{(\beta_2+1)(m+1)}\right).
		 	\end{eqnarray*}
		 	By using this definition, we can get
		 	\begin{eqnarray*}
		 	\hat{K}_{n,m,\alpha_{1},\alpha_2}^{\beta_1,\beta_2}(t-x;x,y)=0;\qquad
		 		\hat{K}_{n,m,\alpha_{1},\alpha_2}^{\beta_1,\beta_2}(s-y;x,y)=0.
		 	\end{eqnarray*}
		 	For $g\in C^2(I^2),$ we consider\\
		 	$\mid K_{n,m,\alpha_{1},\alpha_2}^{\beta_1,\beta_2}(f;x,y)-f(x,y)\mid $
		 	\begin{eqnarray}\label{eq3}
		 		&\leq & \left| K_{n,m,\alpha_{1},\alpha_2}^{\beta_1,\beta_2}(f;x,y)-\hat{K}_{n,m,\alpha_{1},\alpha_2}^{\beta_1,\beta_2}(f;x,y) \right|+\left|\hat{K}_{n,m,\alpha_{1},\alpha_2}^{\beta_1,\beta_2}(f;x,y)-\hat{K}_{n,m,\alpha_{1},\alpha_2}^{\beta_1,\beta_2}(g;x,y) \right|\nonumber\\
		 		&&+\left| \hat{K}_{n,m,\alpha_{1},\alpha_2}^{\beta_1,\beta_2}(g;x,y)-g(x,y)\right|+\left| g(x,y)-f(x,y)\right|\nonumber\\
		 		&=&\left| f\left(\dfrac{nx}{n+1}+\dfrac{1}{(\beta_1+1)(n+1)}, \dfrac{my}{m+1}+\dfrac{1}{(\beta_2+1)(m+1)}\right)-f(x,y)\right| \nonumber \\
		 		&& +\left| \hat{K}_{n,m,\alpha_{1},\alpha_2}^{\beta_1,\beta_2}(f-g;x,y)\right|+\left| \hat{K}_{n,m,\alpha_{1},\alpha_2}^{\beta_1,\beta_2}(g;x,y)-g(x,y)\right|+\left| g(x,y)-f(x,y)\right|.
		 	\end{eqnarray}
		 	Now, by Taylor's polynomial for $g(t,s)\in C^2(I^2)$, we have
		 	\begin{eqnarray*}
		 		g(t,s)&=&g(x,y)+(t-x)\frac{\partial g(x,y)}{\partial x}+\int_x^t(t-u)\frac{{\partial}^2 g(u,y)}{\partial u^2}\, du+(s-y)\frac{\partial g(x,y)}{\partial y}+
		 		\int_y^s (s-v)\frac{{\partial}^2g(x,v)}{\partial v^2}\, dv.
		 	\end{eqnarray*}
		 	Applying the operators $\hat{K}_{n,m,\alpha_{1},\alpha_2}^{\beta_1,\beta_2}(. ;x,y)$ \\
		 	$\left| \hat{K}_{n,m,\alpha_{1},\alpha_2}^{\beta_1,\beta_2}(g;x,y)-g(x,y)\right| $
		 	\begin{eqnarray*}
		 		&\leq &\left| \hat{K}_{n,m,\alpha_{1},\alpha_2}^{\beta_1,\beta_2}\left(\int_x^t(t-u)\frac{{\partial}^2g(u,y)}{\partial u^2}\,du;x,y\right)\right|+\left| \hat{K}_{n,m,\alpha_{1},\alpha_2}^{\beta_1,\beta_2}\left(\int_y^s(s-v)\frac{{\partial}^2g(x,v)}{\partial v^2}\,dv;x,y\right)\right|\\
		 		&\leq &  {K}_{n,m,\alpha_{1},\alpha_2}^{\beta_1,\beta_2}\left(\left|\int_x^t\left| t-u \right| \left| \frac{{\partial}^2g(u,y)}{\partial u^2}\right|\,du\right|;x,y\right)\\
		 		&&+\left|\displaystyle\int_x^{\frac{nx}{n+1}+\frac{1}{(\beta_1+1)(n+1)}}\left| \dfrac{nx}{n+1}+\dfrac{1}{(\beta_1+1)(n+1)}-u \right| \left| \frac{{\partial}^2g(u,y)}{\partial u^2}\right|\,du\right|\\
		 		&& + {K}_{n,m,\alpha_{1},\alpha_2}^{\beta_1,\beta_2}\left(\left|\int_y^s\left| s-v \right| \left| \frac{{\partial}^2g(x,v)}{\partial v^2}\right|\,dv\right|;x,y\right)\\
		 		&&+\left|\int_y^{\frac{my}{m+1}+\frac{1}{(\beta_2+1)(m+1)}}\left| \frac{my}{m+1}+\frac{1}{(\beta_2+1)(m+1)}-v \right| \left| \frac{{\partial}^2g(x,v)}{\partial v^2}\right|\,dv\right|\\
		 		&\leq &\left[{K}_{n,m,\alpha_{1},\alpha_2}^{\beta_1,\beta_2}((t-x)^2;x,y)+\left(\dfrac{nx}{n+1}+\dfrac{1}{(\beta_1+1)(n+1)}\right)^{2}\right.\\
		 		&&+\left.{K}_{n,m,\alpha_{1},\alpha_2}^{\beta_1,\beta_2}((s-y)^2;x,y)+\left(\frac{my}{m+1}+\frac{1}{(\beta_2+1)(m+1)}\right)^{2}\right]\|g\|_{C^2(I^2)}\\&:=&\Theta_{n,m,\alpha_1,\alpha_2}^{\beta_1,\beta_2}(x,y)\|g\|_{C^2(I^2)}.
		 	\end{eqnarray*}
		 	By using the definition of operators $\hat{K}_{n,m,\alpha_{1},\alpha_2}^{\beta_1,\beta_2}(f;x,y)$, we obtain
		 	\begin{eqnarray*}
		 		\mid \hat{K}_{n,m,\alpha_{1},\alpha_2}^{\beta_1,\beta_2}(f;x,y)\mid \leq 3\|f\|_{C(I^2)}.
		 	\end{eqnarray*}
		 	Thus, the equation (\ref{eq3}) becomes:
		 	\begin{eqnarray*}
		 		\mid K_{n,m,\alpha_{1},\alpha_2}^{\beta_1,\beta_2}(f;x,y)-f(x,y)\mid &\leq & 4\|f-g\|_{C(I^2)}+\Theta_{n,m,\alpha_1,\alpha_2}^{\beta_1,\beta_2}(x,y)\|g\|_{C^2(I^2)}\\
		 		&&+\omega\left(f;\mu_{n,m}(x,y)\right)\\
		 		&=&4\left\{\|f-g\|_{C(I^2)}+\frac{1}{4}\Theta_{n,m,\alpha_1,\alpha_2}^{\beta_1,\beta_2}(x,y)\|g\|_{C^2(I^2)}\right\}+\omega(f;\mu_{n,m}(x,y)).
		 	\end{eqnarray*}
		 	Taking the infimum over $g\in C^2(I^2)$ and using the relation (\ref{eq8}), we get our desired result.
		 \end{proof}
		 \begin{thm}
		 	Let $f,g\in C^2(I^2),$ then we have the following identity:
		 	\begin{align*}
		 		\lim_{n\rightarrow \infty}n \left[K_{n,n,\alpha_{1},\alpha_2}^{\beta_1,\beta_2}(fg;x,y)-K_{n,n,\alpha_{1},\alpha_2}^{\beta_1,\beta_2}(f;x,y)K_{n,n,\alpha_{1},\alpha_2}^{\beta_1,\beta_2}(g;x,y)\right]=&x(1-x)f_x(x,y)g_x(x,y)\\
		 		&+y(1-y)f_y(x,y)g_y(x,y).
		 	\end{align*}
		 \end{thm}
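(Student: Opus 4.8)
The plan is to reduce the product identity to the Voronovskaja-type theorem proved above together with the uniform convergence $K_{n,n,\alpha_{1},\alpha_2}^{\beta_1,\beta_2}(g;x,y)\to g(x,y)$ from Theorem \ref{thm2}. Writing $L_n:=K_{n,n,\alpha_{1},\alpha_2}^{\beta_1,\beta_2}$ for brevity and denoting by
\[
A(h):=\left(-x+\frac{1}{\beta_1+1}\right)h_x+\left(-y+\frac{1}{\beta_2+1}\right)h_y+\frac{1}{2}\left[x(1-x)h_{xx}+y(1-y)h_{yy}\right]
\]
the limiting differential operator appearing in that theorem, I would first record that $n\bigl(L_n(h)-h\bigr)\to A(h)$ for every $h\in C^2(I^2)$. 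Since $f,g\in C^2(I^2)$ implies $fg\in C^2(I^2)$, this convergence applies to $h=fg$ as well.

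First I would split
\[
n\left[L_n(fg)-L_n(f)L_n(g)\right]=n\bigl(L_n(fg)-fg\bigr)-n\bigl(L_n(f)L_n(g)-fg\bigr)
\]
and treat the two terms separately. The first term tends to $A(fg)$ directly by the Voronovskaja-type theorem. For the second, I would telescope
\[
L_n(f)L_n(g)-fg=\bigl(L_n(f)-f\bigr)L_n(g)+f\bigl(L_n(g)-g\bigr),
\]
so that
\[
n\bigl(L_n(f)L_n(g)-fg\bigr)=L_n(g)\cdot n\bigl(L_n(f)-f\bigr)+f\cdot n\bigl(L_n(g)-g\bigr).
\]
Letting $n\to\infty$ and using $L_n(g)\to g$ uniformly together with $n(L_n(f)-f)\to A(f)$ and $n(L_n(g)-g)\to A(g)$, this term converges to $g\,A(f)+f\,A(g)$.

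The remaining, purely algebraic step is to verify the cancellation
\[
A(fg)-g\,A(f)-f\,A(g)=x(1-x)f_xg_x+y(1-y)f_yg_y.
\]
Expanding the product derivatives by the Leibniz rule, $(fg)_x=f_xg+fg_x$, $(fg)_{xx}=f_{xx}g+2f_xg_x+fg_{xx}$ (and analogously in $y$), one checks that all first-order terms cancel against the corresponding terms of $g\,A(f)+f\,A(g)$, and that in the second-order part the contributions $f_{xx}g$ and $fg_{xx}$ (resp.\ their $y$-analogues) cancel as well, leaving exactly $\tfrac12 x(1-x)\cdot 2f_xg_x$ and $\tfrac12 y(1-y)\cdot 2f_yg_y$. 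Combining the three limits then gives the claimed identity.

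I expect the only genuine subtlety to be the passage to the limit in the product $L_n(g)\cdot n(L_n(f)-f)$: one factor converges uniformly on $I^2$ to $g$ while the other converges to $A(f)$ under the stated smoothness hypotheses, and one must invoke that the product of such convergent sequences converges to the product of the limits. Everything else is the routine Leibniz-rule bookkeeping indicated above.
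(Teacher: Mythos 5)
Your argument is correct, and it takes a genuinely different route from the paper's. The paper re-expands $fg$, $f$ and $g$ by Taylor's formula about $(x,y)$, rewrites $K_{n,n,\alpha_{1},\alpha_2}^{\beta_1,\beta_2}(fg;x,y)-K_{n,n,\alpha_{1},\alpha_2}^{\beta_1,\beta_2}(f;x,y)K_{n,n,\alpha_{1},\alpha_2}^{\beta_1,\beta_2}(g;x,y)$ as a long combination of Voronovskaja-type remainders and central moments, and only then passes to the limit, the surviving contributions being $f_xg_x\,K_{n,n,\alpha_{1},\alpha_2}^{\beta_1,\beta_2}((t-x)^2;x,y)+f_yg_y\,K_{n,n,\alpha_{1},\alpha_2}^{\beta_1,\beta_2}((s-y)^2;x,y)$ together with cross terms that vanish after multiplication by $n$. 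You instead treat the already-proved Voronovskaja theorem as a black box, telescope via $n\left[L_n(fg)-L_n(f)L_n(g)\right]=n\left(L_n(fg)-fg\right)-L_n(g)\,n\left(L_n(f)-f\right)-f\,n\left(L_n(g)-g\right)$, and push the entire Leibniz-rule bookkeeping into the purely algebraic identity $A(fg)-g\,A(f)-f\,A(g)=x(1-x)f_xg_x+y(1-y)f_yg_y$, which does check out (the first-order terms and the $f_{xx}g$, $fg_{xx}$, $f_{yy}g$, $fg_{yy}$ terms cancel, and no mixed-derivative term arises since $A$ contains none). Both routes ultimately rest on the same moment asymptotics, but yours is shorter, less error-prone, and makes transparent why only the gradient-product terms survive; its only extra requirements are that $fg\in C^2(I^2)$ (automatic for $f,g\in C^2(I^2)$) and the pointwise convergence of the product $L_n(g)\cdot n\left(L_n(f)-f\right)$, which you correctly justify since both factors converge at each fixed $(x,y)$.
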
	
		 \begin{proof}
		 	By Taylor's series, we obtain
		 	\\ $	K_{n,n,\alpha_{1},\alpha_2}^{\beta_1,\beta_2}(fg;x,y)-K_{n,n,\alpha_{1},\alpha_2}^{\beta_1,\beta_2}(f;x,y)\,K_{n,n,\alpha_{1},\alpha_2}^{\beta_1,\beta_2}(g;x,y)$	
		 	\begin{align*}
		 		=& K_{n,n,\alpha_{1},\alpha_2}^{\beta_1,\beta_2}(fg;x,y)-(fg)(x,y)-(f_x(x,y)g(x,y)+f(x,y)g_x(x,y))K_{n,n,\alpha_{1},\alpha_2}^{\beta_1,\beta_2}(t-x;x,y)\\
		 		&-(f_y(x,y)g(x,y)+f(x,y)g_y(x,y))K_{n,n,\alpha_{1},\alpha_2}^{\beta_1,\beta_2}(s-y;x,y)-\dfrac{1}{2}(f_{xx}(x,y)g(x,y)+2f_x(x,y)g_x(x,y)\\
		 		&+f(x,y)g_{xx}(x,y))K_{n,n,\alpha_{1},\alpha_2}^{\beta_1,\beta_2}((t-x)^2;x,y)-\dfrac{1}{2}(f_{yy}(x,y)g(x,y)+2f_y(x,y)g_y(x,y)\\
		 		&+f(x,y)g_{yy}(x,y))K_{n,n,\alpha_{1},\alpha_2}^{\beta_1,\beta_2}((s-y)^2;x,y)-(f(x,y)g_{xy}(x,y)+f_y(x,y)g_x(x,y)\\
		 		&+f_x(x,y)g_y(x,y)+f_{xy}(x,y)g(x,y))K_{n,n,\alpha_{1},\alpha_2}^{\beta_1,\beta_2}((t-x)(s-y);x,y)\\
		 		&-g(x,y)\bigg[K_{n,n,\alpha_{1},\alpha_2}^{\beta_1,\beta_2}(f;x,y)-f(x,y)-f_x(x,y)K_{n,n,\alpha_{1},\alpha_2}^{\beta_1,\beta_2}(t-x;x,y)-f_y(x,y)K_{n,n,\alpha_{1},\alpha_2}^{\beta_1,\beta_2}(s-y;x,y)\\
		 		&-\dfrac{1}{2}f_{xx}(x,y)K_{n,n,\alpha_{1},\alpha_2}^{\beta_1,\beta_2}((t-x)^2;x,y)-\dfrac{1}{2}f_{yy}(x,y)K_{n,n,\alpha_{1},\alpha_2}^{\beta_1,\beta_2}((s-y)^2;x,y)\\
		 		&-f_{xy}(x,y)K_{n,n,\alpha_{1},\alpha_2}^{\beta_1,\beta_2}((t-x)(s-y);x,y)\bigg]-K_{n,n,\alpha_{1},\alpha_2}^{\beta_1,\beta_2}(f;x,y)\left[K_{n,n,\alpha_{1},\alpha_2}^{\beta_1,\beta_2}(g;x,y)-g(x,y)\right.\\
		 		&-\left.g_x(x,y)K_{n,n,\alpha_{1},\alpha_2}^{\beta_1,\beta_2}(t-x;x,y)-g_y(x,y)K_{n,n,\alpha_{1},\alpha_2}^{\beta_1,\beta_2}(s-y;x,y)-\dfrac{1}{2}g_{xx}(x,y)K_{n,n,\alpha_{1},\alpha_2}^{\beta_1,\beta_2}((t-x)^2;x,y)\right.\\
		 		&\left. -\dfrac{1}{2}g_{yy}(x,y)K_{n,n,\alpha_{1},\alpha_2}^{\beta_1,\beta_2}((s-y)^2;x,y)-g_{xy}(x,y)K_{n,n,\alpha_{1},\alpha_2}^{\beta_1,\beta_2}((t-x)(s-y);x,y)\right]\\
		 		& +g_x(x,y)K_{n,n,\alpha_{1},\alpha_2}^{\beta_1,\beta_2}(t-x;x,y)[f(x,y)-K_{n,n,\alpha_{1},\alpha_2}^{\beta_1,\beta_2}(f;x,y)]\\
		 		&+g_y(x,y)K_{n,n,\alpha_{1},\alpha_2}^{\beta_1,\beta_2}(s-y;x,y)[f(x,y)-K_{n,n,\alpha_{1},\alpha_2}^{\beta_1,\beta_2}(f;x,y)]\\
		 		&+\dfrac{1}{2}g_{xx}(x,y)K_{n,n,\alpha_{1},\alpha_2}^{\beta_1,\beta_2}((t-x)^2;x,y)[f(x,y)-K_{n,n,\alpha_{1},\alpha_2}^{\beta_1,\beta_2}(f;x,y)]\\
		 		&+\dfrac{1}{2}g_{yy}(x,y)K_{n,n,\alpha_{1},\alpha_2}^{\beta_1,\beta_2}((s-y)^2;x,y)[f(x,y)-K_{n,n,\alpha_{1},\alpha_2}^{\beta_1,\beta_2}(f;x,y)]\\
		 		& +g_{xy}(x,y)K_{n,n,\alpha_{1},\alpha_2}^{\beta_1,\beta_2}((t-x)(s-y);x,y)[f(x,y)-K_{n,n,\alpha_{1},\alpha_2}^{\beta_1,\beta_2}(f;x,y)]\\
		 		&+f_x(x,y)g_x(x,y)K_{n,n,\alpha_{1},\alpha_2}^{\beta_1,\beta_2}((t-x)^2;x,y)+f_x(x,y)g_y(x,y)K_{n,n,\alpha_{1},\alpha_2}^{\beta_1,\beta_2}((t-x)(s-y);x,y)\\
		 		&+f_y(x,y)g_x(x,y)K_{n,n,\alpha_{1},\alpha_2}^{\beta_1,\beta_2}((t-x)(s-y);x,y)+f_y(x,y)g_y(x,y)K_{n,n,\alpha_{1},\alpha_2}^{\beta_1,\beta_2}((s-y)^2;x,y)
		 	\end{align*}
		 	Now, applying the operators and taking limit as ${n\rightarrow \infty},$ we attain
		 	\begin{align*}
		 		\lim_{n\rightarrow\infty} n [K_{n,n,\alpha_{1},\alpha_2}^{\beta_1,\beta_2}(fg;x,y)-K_{n,n,\alpha_{1},\alpha_2}^{\beta_1,\beta_2}(f;x,y)K_{n,n,\alpha_{1},\alpha_2}^{\beta_1,\beta_2}(g;x,y)]=&x(1-x)f_x(x,y)g_x(x,y)\\
		 		&+y(1-y)f_y(x,y)g_y(x,y).
		 	\end{align*}
		 \end{proof}
	\section{Numerical Examples}\label{numerical}
	In this section, we will prove the convergence of the operators by choosing different values of the introduced shape parameters $\alpha_1, \alpha_2$ and $\beta_1,\beta_2$ and $n, m$ taking into account different examples. 
	\begin{example}
Let $f(x)=e^{-x} \sin^2(xy)$(red). In the Figure \ref{f1}, we show the convergence of the operators \eqref{op2} by choosing various values of $\alpha_1, \alpha_2.$ It represents the effect of the introduced shape parameters. The values of the parameters are as $n=m=25,$ $\beta_1=\beta_2=0.5,$ and $\alpha_1=0.8,\, \alpha_2=0.8$(blue), $\alpha_1=0.3,\, \alpha_2=0.4$ (green). Figure \ref{f2} represents the error of approximation of the operators \eqref{op2} from the given function $f(x,y)$ defined as $E_{n,m,\alpha_{1},\alpha_2}^{\beta_1,\beta_2}(f;x,y)=\mid K_{n,m,\alpha_{1},\alpha_2}^{\beta_1,\beta_2}(f;x,y)-f(x,y)\mid$ for the specific values of the parameter.
		\begin{figure}[h]
			\includegraphics[width=0.75\columnwidth]{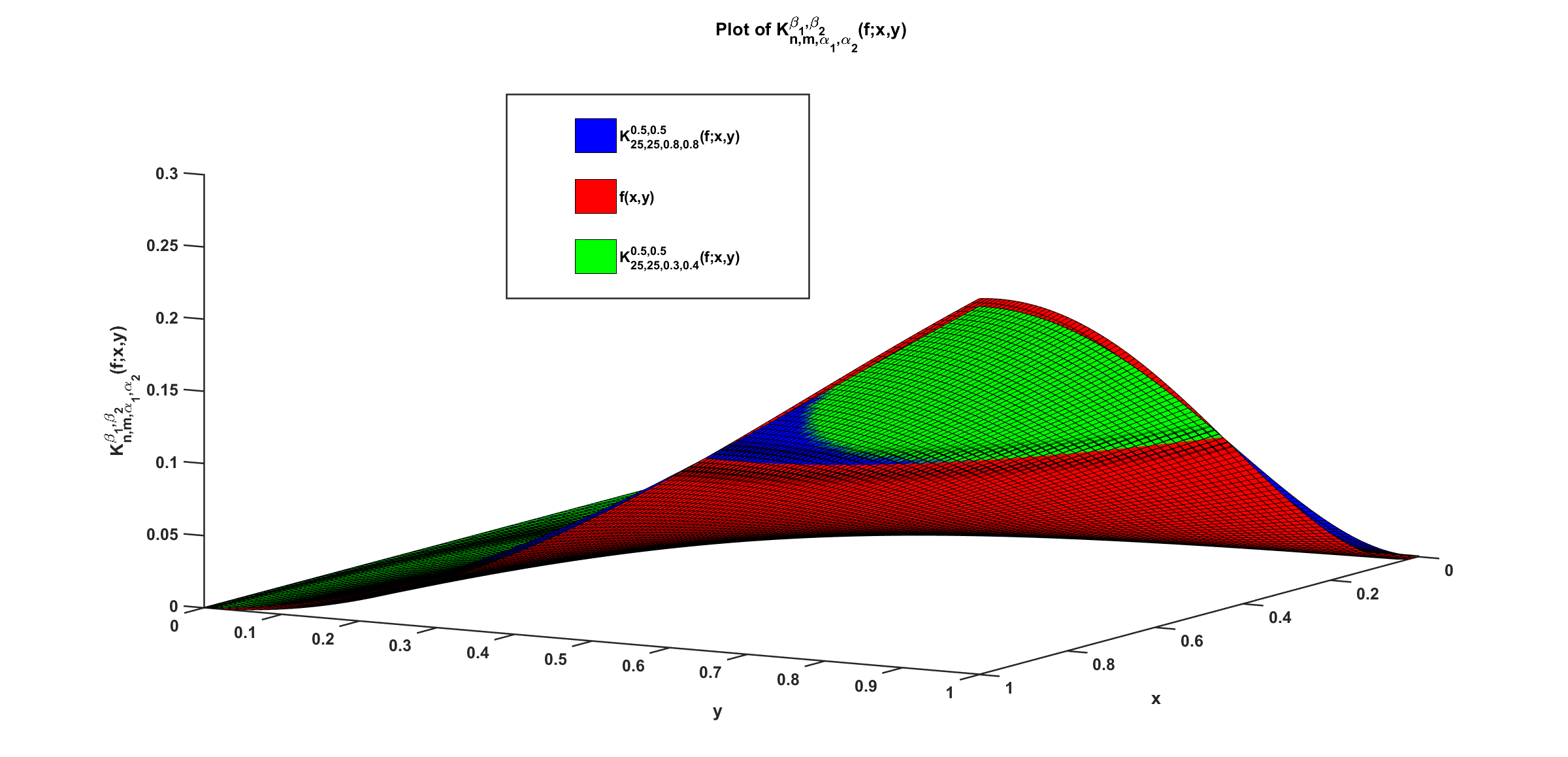}\\
			\caption{Convergence of the operators}
			\label{f1}
		\end{figure}
	\begin{figure}[h]
		\includegraphics[width=0.75\columnwidth]{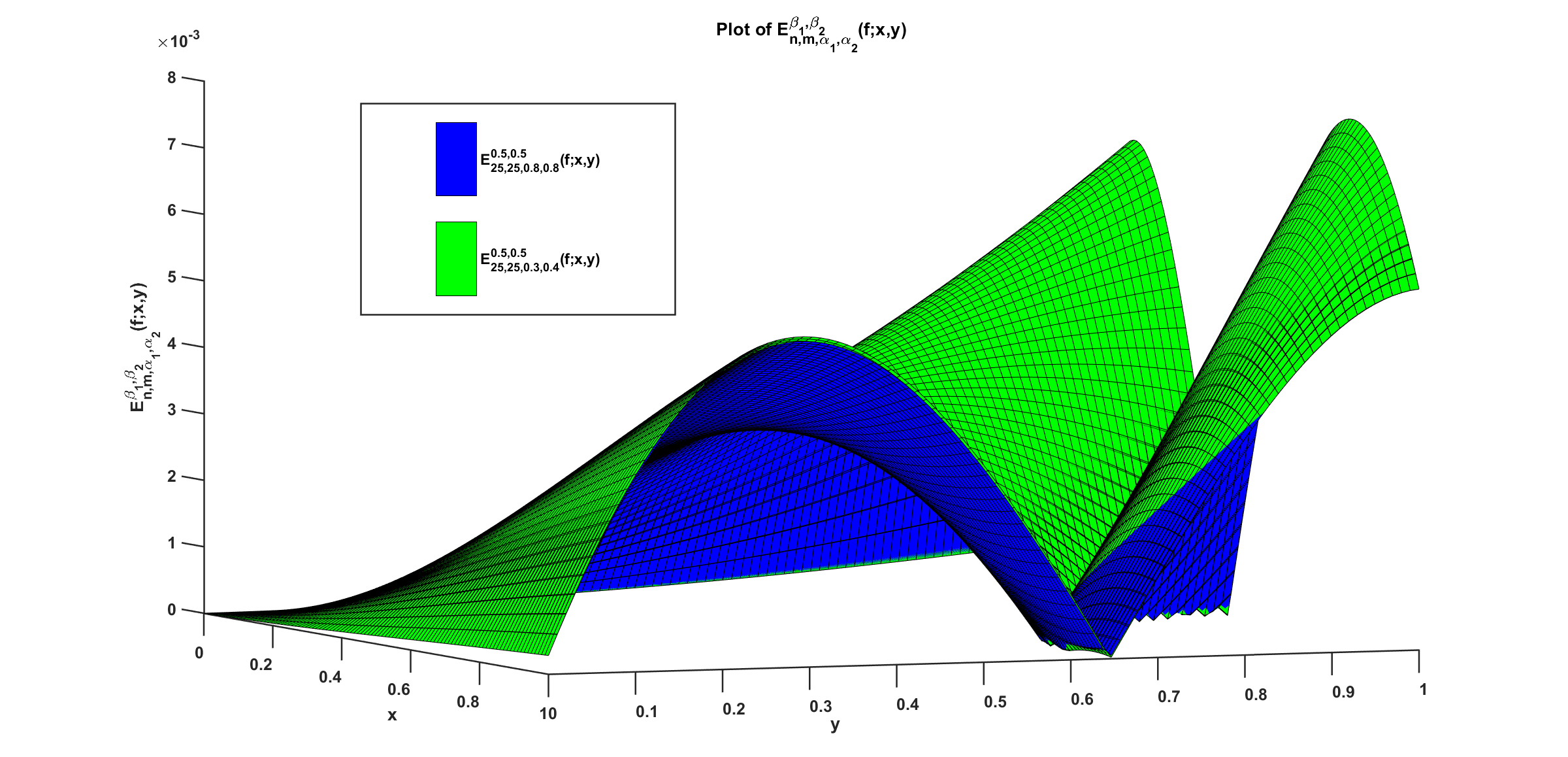}\\
		\caption{Error analysis of the operators}
		\label{f2}
	\end{figure}	
	\end{example}
\begin{example}
Consider $f(x,y)=x^4y^2 -2x^2y^3+xy^2$(red).In the Figure \ref{f3}, we show the convergence of the operators \eqref{op2} for the different values of $n=m=20$ (blue) and $n=m=30$ (green), where other parameters are fixed as $\alpha_1=0.5,\, \alpha_2=0.4,\, \beta_1=0.8, \,\beta_2=0.8.$ From the figure \ref{f3}, it is clear that as we increase the values of $n$ and $m$, the operators converge faster to the given function $f(x,y).$\\
	\begin{figure}[h]
		\includegraphics[width=0.75\columnwidth]{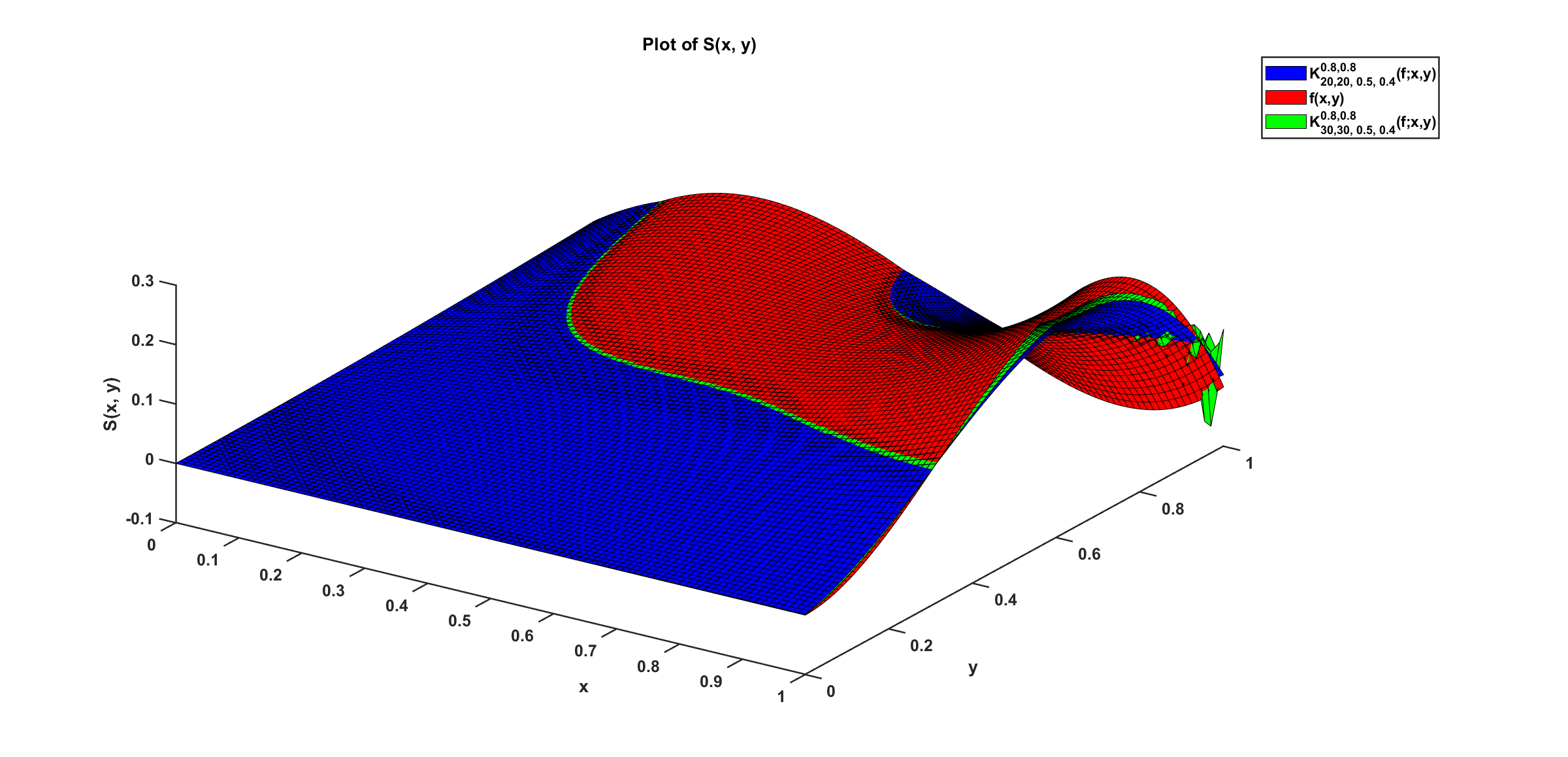}\\
		\caption{Convergence of the operators}
		\label{f3}
	\end{figure}
	Similarly, Figure \ref{f4} represents the error of approximation of the operators for the certain values of the parameters. The results from this figure are related to Figure \ref{f3} that is by increasing the values of $n$ and $m$, the induced error is converging to zero fastly.
	\begin{figure}[h]
		\includegraphics[width=0.75\columnwidth]{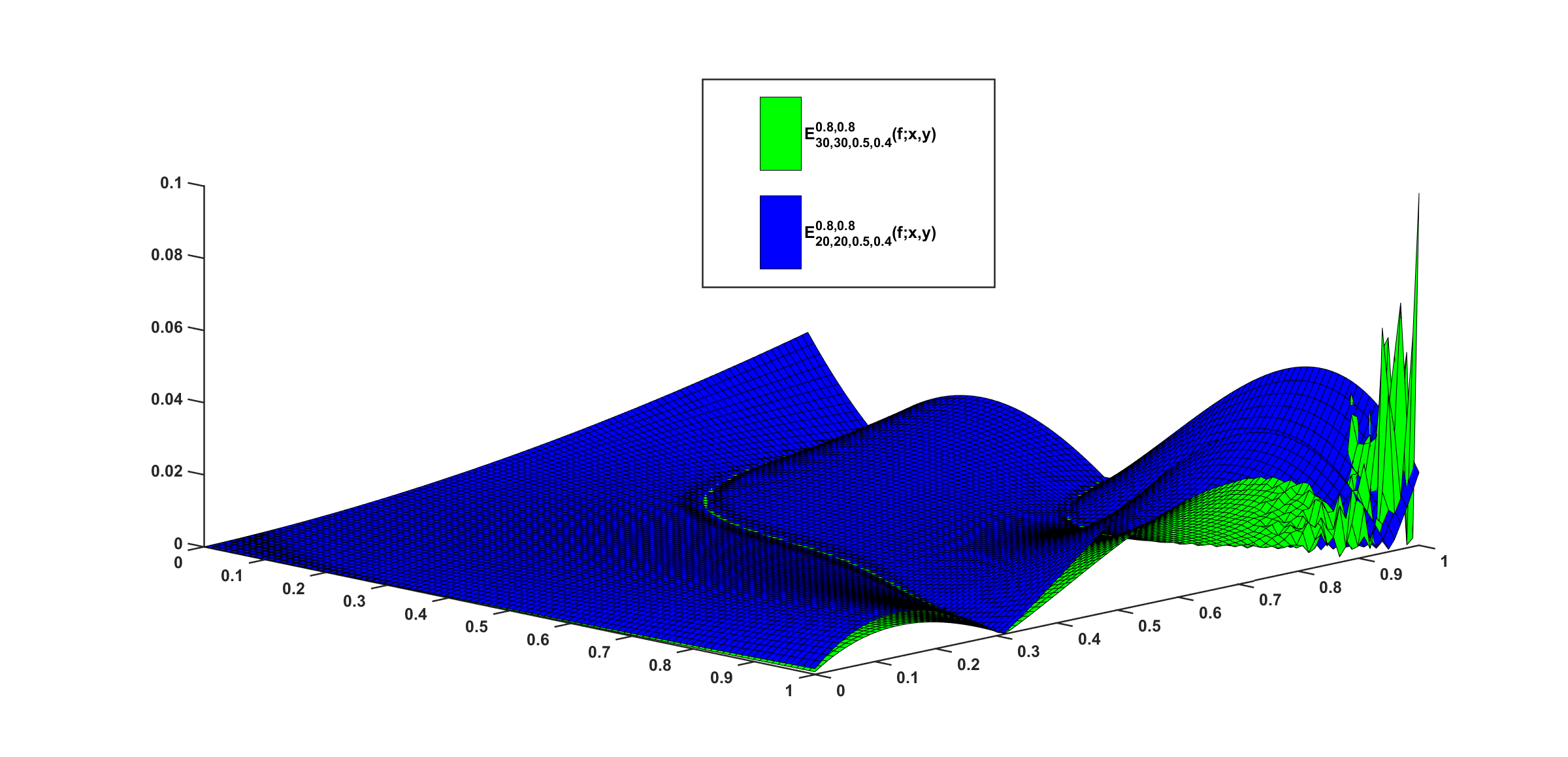}\\
		\caption{Error analysis of the operators}
		\label{f4}
	\end{figure}

	\end{example}
	\begin{example}
Let	$f(x,y)=y^2\cos(x)$(red). In the present example, we have studied the convergence of the operators \eqref{op2} by varying the introduced parameters $\beta_1$ and $\beta_2$. We consider $n=m=10,$ $\alpha_1=0.9, \,\alpha_2=0.8,$ and $\beta_1=0.5, \,\beta_2=0.4$(blue), $\beta_1=0.3, \beta_2=0.3$(green) in Figure \ref{f7}. Also for these certain parameters, we show the error of approximation in Figure \ref{f8}.\\
		\begin{figure}[h]
			\includegraphics[width=0.75\columnwidth]{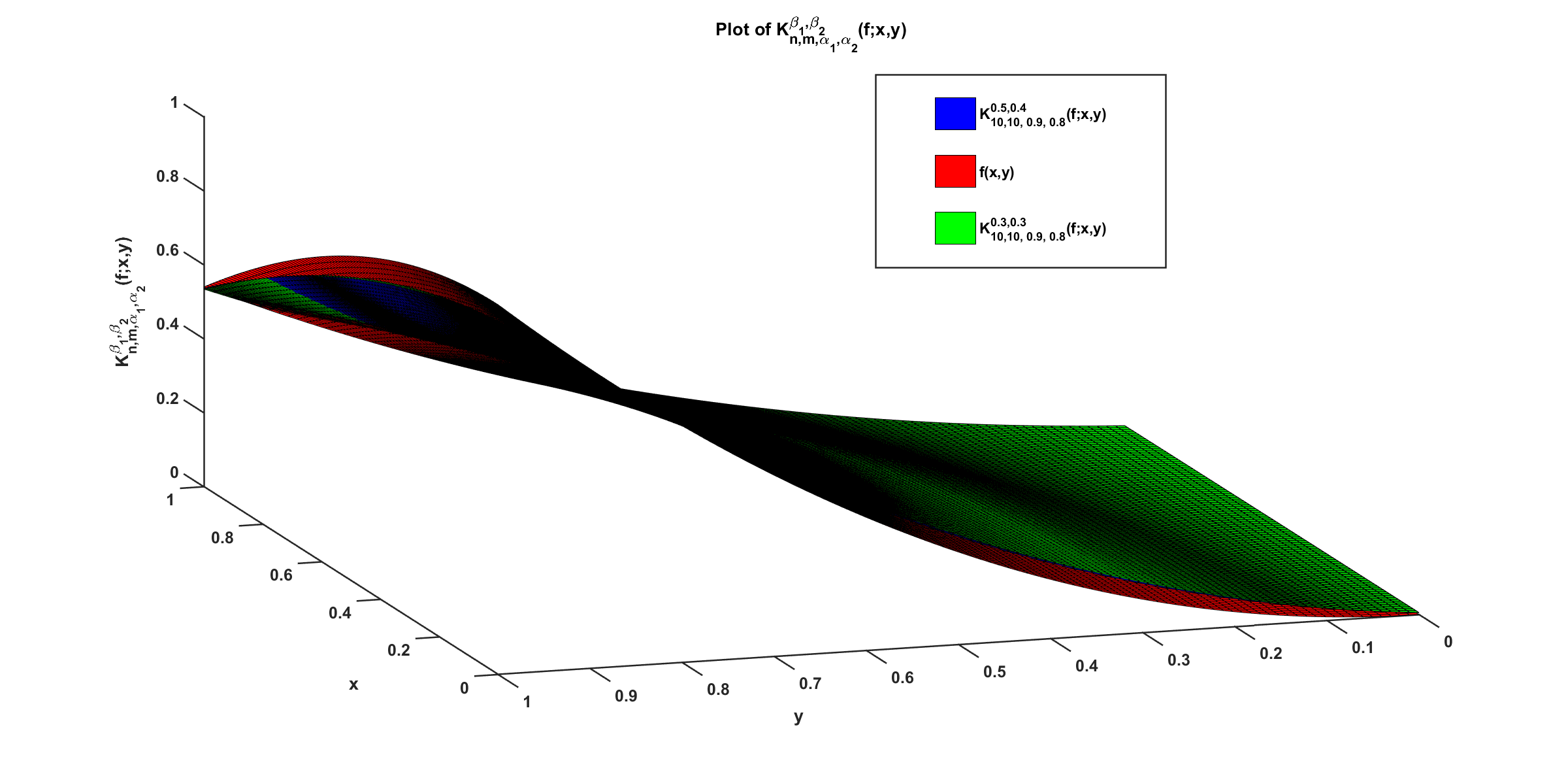}\\
			\caption{Convergence of the operators}
			\label{f7}
		\end{figure}
		\begin{figure}[h]
			\includegraphics[width=0.75\columnwidth]{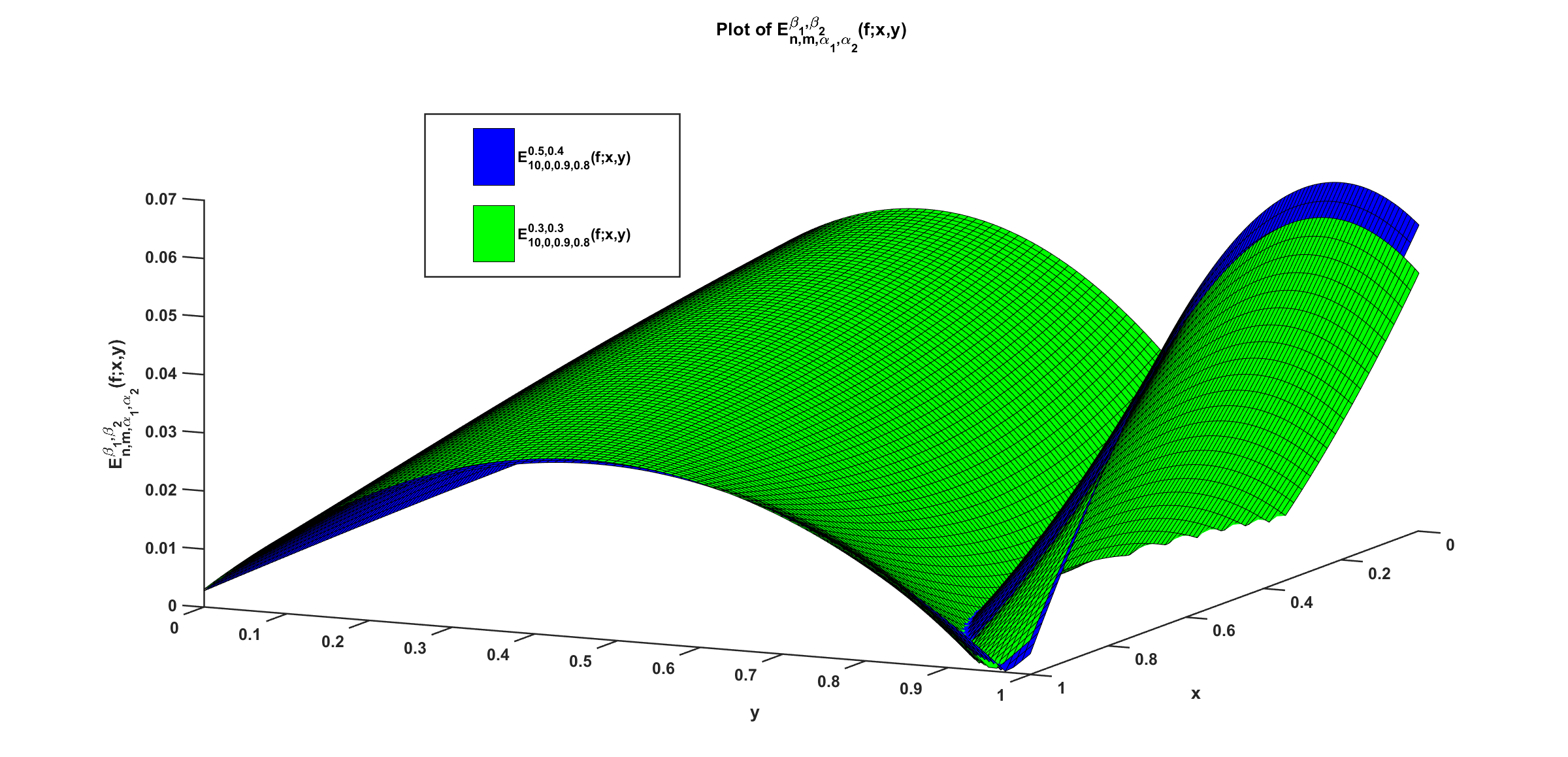}\\
			\caption{Error analysis of the operators}
			\label{f8}
		\end{figure}
	\end{example}
	{\textbf{Conclusion:}} The present article provides a new generalization of $\alpha-$Bernstein operators of Kantorovich form to preserve the linear functions. In order to approximate the functions of two variables, we also define the bivariate of these functions. Also, we present some numerical illustrations to justify our theoretical results. \\
	\textbf{Acknowledgment}
	The first author is thankful to the ``University Grants Commission (UGC) (1001/(CSIR-UGC NET DEC. 2017))" India for financial support to carry out the above research work.\\
	\textbf{Conflicts of interest Statement} The corresponding author, on behalf of all the authors, declares the absence of any conflicts of interest.\\
	\textbf{Availability of Data}
	This article is not subject to data sharing requirements.
\bibliographystyle{abbrv}
\bibliography{bib}
\end{document}